\def\n{\noindent}
\def\R{\mathbb{R}}
\def\E{\mathbb{E}}
\def\v{\varphi}
\def\e{\varepsilon}
\def\s{\sigma}
\def\e{\epsilon}
\def\d{\delta}
\def\T{\mathcal{T}}
\def\P{\mathcal{P}}
\def\En{\mathbb{E}^n}
\def\X{\mathcal{X}}
\def\Y{\mathcal{Y}}
\def\V{\mathcal{V}}
\def\a{\alpha}
\def\b{\beta}
\def\dx{d_{\X}}
\def\dy{d_{\Y}}
\def\pull{\text{pull}}
\newtheorem{theorem}{Theorem}[section]
\newtheorem{lemma}[theorem]{Lemma}
\newtheorem{cor}[theorem]{Corollary}
\theoremstyle{definition}
\theoremstyle{remark}
\numberwithin{equation}{section}
\begin{document}

\title{Isometric Embeddings of Polyhedra into Euclidean Space}

\author{B. Minemyer}
\address{Division of Mathematics, Alfred University, Alfred, New York 14802}
\email{minemyer@alfred.edu}



\date{November 27, 2013.}


\keywords{Differential geometry, Discrete geometry, Metric Geometry, Euclidean polyhedra, Polyhedral Space, intrinsic isometry, isometric embedding}

\begin{abstract}

In this paper we consider piecewise linear (pl) isometric embeddings of Euclidean polyhedra into Euclidean space.  A Euclidean polyhedron is just a metric space $\P$ which admits a triangulation $\T$ such that each $n$-dimensional simplex of $\T$ is affinely isometric to a simplex in $\mathbb{E}^n$.  We prove that any 1-Lipschitz map from an $n$-dimensional Euclidean polyhedron $\P$ into $\mathbb{E}^{3n}$ is $\e$-close to a pl isometric embedding for any $\e > 0$.  If we remove the condition that the map be pl then any 1-Lipschitz map into $\mathbb{E}^{2n + 1}$ can be approximated by a (continuous) isometric embedding.  These results are extended to isometric embedding theorems of spherical and hyperbolic polyhedra into Euclidean space by the use of the Nash-Kuiper $C^1$ isometric embedding theorem (\cite{Nash1} and \cite{Kuiper}).  Finally, we discuss how these results extend to various other types of polyhedra.

\end{abstract}

\maketitle



\section{History and Introduction}\label{History and Introduction}

In \cite{Zalgaller}, Zalgaller proves that every Euclidean polyhedron\footnote{In the literature, a Euclidean polyhedron is sometimes referred to as a Polyhedral Space.  We use the term Euclidean Polyhedron to be consistent with \cite{Petrunin1}} with dimension $n =$ 2 or 3 admits a pl isometry into $\mathbb{E}^n$.  This, along with the Nash isometric embedding theorems \cite{Nash1} and \cite{Nash2}, prompted Gromov in \cite{Gromov2} to ask whether or not this result could be extended to polyhedra of arbitrary dimension.  Answering this question in the affirmitive was Krat in \cite{Krat}, but here she asked a more subtle question.  Namely, can any short\footnote{1-Lipschitz} map from an $n$-dimensional Euclidean polyhedron into $\mathbb{E}^n$ be approximated by a pl isometry?  Krat proved that this question was true in the case when $n=2$, and Akopyan in \cite{Akopyan} generalized Krat's work to arbitrary dimensions.

The above results of Zalgaller, Krat, and Akopyan are striking because of the equality of the dimensions involved.  At first glance it may even seem that these results cannot possibly be true.  But in \cite{Burago} Burago gives a good example making the dimension 2 case more intuitive.  His example is to use paper, scissors, and glue to construct your favorite 2-dimensional Euclidean polyhedron\footnote{Of course, not every 2-dimensional Euclidean polyhedron can be constructed in this way.  But this example still gets the point across.}.  Then set it on the floor and step on it.  This is a pl isometry into $\mathbb{E}^2$.

The purpose of this paper is to investigate the questions of Krat and Akopyan for the case of isometric embeddings, and not merely just isometries.  Obviously, the dimension requirements will have to increase.  But at first glance it is not obvious by how much the dimensions must increase, or even if such isometric embeddings exist.  As we will see in section \ref{Proof of the Main Theorem}, the fact that such isometric embeddings exist is essentially a corollary of Akopyan's result.  Namely, we will observe the following:

\begin{cor}[Akopyan]\label{Akopyan Corollary}

Let $\P$ be an $n$-dimensional Euclidean polyhedron and let $f:\P \rightarrow \mathbb{E}^{3n + 1}$ be a short map.  Then $f$ is $\e$-close to a pl intrinsic\footnote{Since the spaces considered are proper geodesic metric spaces, by Le Donne in \cite{Donne} intrinsic isometries are equivalent to the more common notion of a path isometry.  In any case, this will be defined in Section \ref{Preliminaries}.} isometric embedding for any $\e > 0$.

\end{cor}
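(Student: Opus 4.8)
The plan is to bootstrap from Akopyan's theorem, which provides (for arbitrary dimension $n$) that any short map from an $n$-dimensional Euclidean polyhedron into $\En$ is $\e$-close to a pl isometry. The idea is to decompose a short map $f : \P \to \En^{3n+1}$ into $n+1$ "chunks" by projecting onto a suitable product decomposition $\En^{3n+1} \cong \En^{n+1} \times \En^{n} \times \cdots \times \En^n$ (one factor of dimension $n+1$ and $n$ factors of dimension $n$), or more crudely, into $n+1$ copies of $\En^{2n+1}$-type pieces, and to fix up the metric defect using one projection factor at a time. This is the standard Nash-style strategy: one never tries to correct all the length defect at once, but instead adds corrections in nearly orthogonal directions, each of which is handled by an application of a lower-dimensional result — here, Akopyan's $n$-into-$n$ pl isometry theorem applied simplex-by-simplex (or chart-by-chart).

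First I would set up the bookkeeping: fix a triangulation $\T$ of $\P$ adapted to $f$, and on each top-dimensional simplex record the "metric deficiency" of $f$, i.e. how far $f$ is from being length-preserving. Since $f$ is short, this deficiency is a nonnegative quantity, and by an easy argument (subdividing $\T$ finely enough and using that $f$ is Lipschitz, hence pl-approximable) one may assume $f$ itself is pl and \emph{strictly} short on each simplex, with the deficiency metric being itself a genuine Euclidean polyhedral metric on a refinement. Second, I would invoke Akopyan: the deficiency, viewed as an $n$-dimensional Euclidean polyhedral metric $g_{\text{def}}$ on $\P$, admits (after further subdivision) a pl isometry $h : (\P, g_{\text{def}}) \to \En^n$ that is $\e'$-close to the constant map — actually here one wants $h$ with small image, which Akopyan's approximation statement gives since the constant map is short for $g_{\text{def}}$. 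Third, I would combine: define the new map $F = (f, h) : \P \to \En^{3n+1} \times \En^{n} = \En^{4n+1}$ — wait, this overshoots, so the real work is to absorb the extra $n$ coordinates back.

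The actual mechanism to stay within $\En^{3n+1}$ is that the correction $h$ need not be added in \emph{fresh} coordinates; because $f$ is strictly short, at each point there is an $n$-dimensional subspace of the normal space to (a smoothing of) $f$ — but $f$ is only pl, so instead one argues locally: on each simplex $\s$, $f|_\s$ is affine, its image spans an affine subspace of dimension $\le n$, and the orthogonal complement in $\En^{3n+1}$ has dimension $\ge 2n+1 \ge n$, leaving room to add an $n$-dimensional pl correction coming from Akopyan applied to $g_{\text{def}}|_\s$. The corrections on adjacent simplices are reconciled along common faces by a further subdivision and a partition-of-unity–type interpolation, using that the length defect there is continuous and small. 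Iterating this correction at most a bounded number of times (geometrically decreasing the remaining deficiency, exactly as in Nash-Kuiper) produces in the limit a pl map $\tilde f$ that is isometric for the path metric and $\e$-close to $f$; a general-position perturbation within the ample room $3n+1 > 2n$ makes $\tilde f$ an embedding, and a final small perturbation restores exact (not just limiting) isometry since the iteration converges in a strong enough topology.

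The main obstacle I expect is the \emph{globalization / face-matching}: Akopyan's theorem is applied to produce the metric correction, but one must ensure the corrections chosen on different top-dimensional simplices agree along shared $(n-1)$-faces so that the corrected map is well-defined and continuous (indeed pl) on all of $\P$, while simultaneously keeping each correction $C^0$-small and directed into the available normal space. Getting these three demands — piecewise-linearity of the total correction, smallness, and consistency on the combinatorial skeleton — to hold at once, uniformly over the (possibly many) iterations, is the delicate part; everything else (the shortness reduction, the general-position embedding step, the convergence of the iteration) is routine in the style of \cite{Akopyan} and the Nash–Kuiper scheme.
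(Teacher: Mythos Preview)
Your proposal has a genuine gap, and in fact the obstacle you single out at the end --- the face-matching problem for the simplex-local normal corrections --- is precisely the point at which your argument breaks down, while the paper's proof is designed to avoid it entirely. You try to add the Akopyan correction in the orthogonal complement of the image of $f|_\sigma$ \emph{simplex by simplex}; these complements differ from simplex to simplex, and you give no mechanism to reconcile the corrections along faces. Your suggested fix (``partition-of-unity--type interpolation'') would in general destroy both piecewise-linearity and the exact isometry you are aiming for. A second, independent gap is your endgame: once you have a pl intrinsic isometry $\tilde f$, you propose to perturb it into general position to make it an embedding and then ``restore exact isometry'' by another small perturbation. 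But a general-position perturbation of a pl isometry changes the induced quadratic form on every simplex, so you are back where you started; there is no free ``restoration'' step.

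The paper's argument sidesteps all of this with a single observation: split the target \emph{globally} as $\E^{3n+1} = \E^{2n+1} \times \E^{n}$ and write $f = f_1 \oplus f_2$ accordingly. The quadratic forms satisfy $G_f = G_{f_1} + G_{f_2}$ on every simplex, so $G - G_{f_2}$ is still a positive-definite (Euclidean polyhedral) metric on $\P$, and $f_1$ is strictly short with respect to it. Now perturb $f_1$ in $\E^{2n+1}$ to a strictly short pl \emph{embedding} $h_1$ (general position, Lemma~\ref{embedding lemma}); this uses only the first block and is a global perturbation, not a simplex-by-simplex one. Then the deficiency metric $G - G_{h_1}$ is again a Euclidean polyhedral metric, $f_2$ is short for it, and Akopyan's theorem applied once to $(\P, G - G_{h_1})$ produces $h_2:\P\to\E^n$ with $G_{h_2} = G - G_{h_1}$, already globally defined and pl on a subdivision --- Akopyan's theorem handles the face-matching internally. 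The map $h = h_1 \oplus h_2$ is then automatically an embedding (because $h_1$ is) and automatically an intrinsic isometry (because $G_{h_1}+G_{h_2}=G$). No iteration, no local normal bundles, no post-hoc embedding step. The key idea you are missing is to do the embedding step \emph{first}, in a fixed coordinate block, and let Akopyan absorb the remaining metric in the other block.
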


In light of Corollary \ref{Akopyan Corollary}, our first goal is to lower the dimension of the target Euclidean space.  The ultimate goal would be to get the dimension all the way down to $2n+1$.  The first main result of this paper uses a slight modification of Akopyan's result in conjunction with a topological trick to reduce this dimension by 1 to $3n$.  It is then shown that, if we remove the restriction that our approximate isometric embedding be pl, we may lower the dimensionality of our target Euclidean space all the way down to our goal of $2n + 1$.  These results are as follows:

\begin{theorem}\label{Main Theorem - Euclidean}

Let $\P$ be an $n$-dimensional Euclidean polyhedron, let $f:\P \rightarrow \E^N$ be a short map, and let $\{ \e_i \}_{i = 1}^{\infty}$ be a sequence of positive real numbers.  Fix a triangulation $\T$ of $\P$ and let $v$ be a fixed vertex of $\T$.  Then

	\begin{enumerate}
	
	\item There exists a pl intrinsic isometric embedding $h: \P \rightarrow \mathbb{E}^N$ which is an $\e_l$ approximation of $f$ within $Sh^l(v)$ provided $N \geq 3n$.
	
	\item There exists an (continuous) intrinsic isometric embedding $h: \P \rightarrow \mathbb{E}^N$ which is an $\e_l$ approximation of $f$ within $Sh^l(v)$ provided $N \geq 2n + 1$.
	
	\end{enumerate}

\end{theorem}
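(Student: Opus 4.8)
\emph{Overall strategy.}
The plan is to reduce both parts to a single compact, relative statement, and then handle the two dimension ranges with two flavours of one idea: put the given short map into general position as a \emph{strictly} short pl embedding, and then restore the missing length by corrugations --- piecewise linear for part~(1), an iterated Nash--Kuiper--type construction for part~(2). Throughout I would use the observation from Section~\ref{Preliminaries} (via Le Donne) that, $\P$ being a proper geodesic space, a map is an intrinsic isometric embedding precisely when it is a topological embedding preserving lengths of paths. The reduction I have in mind is a lemma of the following form: if $K$ is a finite $n$-dimensional subcomplex of $(\P,\T)$, $g\colon K\to\E^N$ is short, $L\subseteq K$ is a subcomplex on which $g$ is already a pl (resp.\ continuous) intrinsic isometric embedding, and $\d>0$, then $g$ is $\d$-close to a pl (resp.\ continuous) intrinsic isometric embedding $K\to\E^N$ agreeing with $g$ on $L$. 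Granting this, one produces $h$ by induction over the shells $Sh^l(v)$: at step $l$ one extends the map already built to a short map on $Sh^l(v)$ that is close to $f$ on the new shell, applies the lemma with $L$ a slightly smaller subcomplex and with a tiny error $\d_l$ from a schedule chosen compatibly with $\{\e_i\}$, and takes $h$ to be the limit, which stabilizes on every compact set. This step is routine bookkeeping and I will not dwell on it.

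\emph{Part (1): the pl case, $N\ge 3n$.}
After subdividing $\T$, rescaling $g$ slightly on $K\setminus L$ and perturbing into general position (supported away from $L$), I may assume $g$ is a pl embedding that is strictly short on every simplex outside $L$; this uses $N\ge 2n+1$, the general-position embedding dimension of an $n$-complex, and strict shortness makes the metric defect (the gap between the intrinsic metric of $K$ and the one pulled back through $g$) positive there. The plan is then to absorb that defect by a modification of Akopyan's construction \cite{Akopyan}: a finite hierarchy of small-amplitude pl corrugations housed in a regular neighborhood of $g(K)$, matched along every shared face and trivial over $L$. Applied verbatim, Akopyan's folding consumes a rank-$(2n+1)$ normal bundle and hence lands in $\E^{3n+1}$ --- this reproves Corollary~\ref{Akopyan Corollary} --- and the saving to $3n$ is the ``topological trick'' promised in the introduction: the general-position embedding and the corrugation directions are chosen compatibly, with what would be the last normal direction folded into the others (legitimate precisely because an $n$-complex still has general-position room inside $\E^{3n}$), so that only $2n$ independent normal directions are used. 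Keeping amplitudes small preserves injectivity, and since the corrugations only add back the missing length, the resulting pl map is an intrinsic isometry by the path-length criterion.

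\emph{Part (2): the continuous case, $N\ge 2n+1$.}
Again I start from a strictly short pl embedding of $g$ into $\E^{2n+1}$, but now $2n+1$ is exactly the general-position dimension with nothing left over, so there is no room for pl corrugations that spread over several normal directions. Instead I would absorb the metric defect by an iterated corrugation argument in the spirit of the Nash--Kuiper $C^1$ theorem \cite{Kuiper}, carried out simplex by simplex: on each $n$-simplex the image spans a flat $n$-plane whose normal space has dimension $n+1\ge 1$, the defect is decomposed into primitive quadratic forms, and these are absorbed one at a time by single high-frequency corrugations into normal directions, with the errors controlled so the iteration converges in $C^0$. The corrugations are matched continuously across the $(n-1)$-skeleton and are kept small enough at every stage to preserve injectivity, so the uniform limit $h$ is a continuous topological embedding that is an intrinsic isometry. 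Since the construction only needs codimension $\ge 1$, the one genuine constraint on $N$ is $N\ge 2n+1$, forced by the initial embedding.

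\emph{Where the difficulty lies.}
The exhaustion of the first step is bookkeeping. The real work is the pl corrugation in part~(1): performing the modified Akopyan folding in only $2n$ normal directions while \emph{at the same time} keeping the map globally injective across all simplices and all scales of the hierarchy, matching corrugations along every shared face, and not disturbing what has already been fixed over $L$ --- it is the simultaneity of all these constraints, rather than any one of them, that I expect to be the obstacle. In part~(2) the analogous point is making the Nash--Kuiper corrugations consistent across the singular, non-manifold $(n-1)$-skeleton and checking that injectivity together with the metric estimates survive the infinite iteration.
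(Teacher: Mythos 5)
Your proposal does not match the paper's argument, and in both parts the point you flag as ``the real work'' is exactly where your sketch has a gap that the paper's construction is designed to avoid.

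For part~(1), your ``topological trick'' is to run a modified Akopyan-style corrugation using one fewer normal direction, but you give no mechanism for doing so and you yourself identify the simultaneous injectivity/matching constraints as the obstacle. The paper never touches Akopyan's folding. Instead it splits $\E^{3n} = \E^{2n}\times\E^{n}$: first it upgrades the strictly short pl embedding so that the \emph{first $2n$ coordinates alone} already place the vertices in general position in $\E^{2n}$ (this is the extra content of the construction beyond Corollary~\ref{embedding corollary}), so by Corollary~\ref{Key Corollary} the projection to $\E^{2n}$ is injective on the closed star of every point. Then it applies Theorem~\ref{Relative Akopyan} with target $\E^{n}$ to only the \emph{last} $n$ coordinates to restore the metric. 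Global injectivity of the direct sum comes from two independent sources: near the diagonal it is supplied by the $\E^{2n}$-local-embedding, and away from the diagonal (a compact set once one restricts to a shell) it is supplied by the fact that the Akopyan perturbation can be made smaller than the minimum separation $\mu_k$ of the already-injective map $F$. So the saving from $3n+1$ to $3n$ comes from lowering the coordinates devoted to \emph{embedding} from $2n+1$ to $2n$ by demanding only a local embedding, not from compressing corrugation directions. Your approach would require re-engineering Akopyan's proof; the paper's does not.

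For part~(2), you propose to run a Nash--Kuiper-style corrugation by hand, simplex by simplex, and again you flag the cross-skeleton consistency across the singular $(n-1)$-skeleton as the hard point. The paper sidesteps this entirely. It builds two interleaved sequences: at each stage it applies Lemma~\ref{embedding lemma} to get a strictly short pl \emph{embedding} $f_i$ close to the previous map, then applies Theorem~\ref{Relative Akopyan} to get a pl intrinsic \emph{isometry} $h_i$ close to $f_i$, with a geometric decay schedule $\alpha^k_i,\beta^k_i < \e_k/4^i$. The limit $h=\lim h_i$ exists but is no longer pl. That $h$ remains injective is shown by an exhaustion trick (à la Nash \cite{Nash1}): on compact sets of pairs $(x,y)$ bounded away from the diagonal, the minimum separation $\mu_i$ of $f_i$ is positive, and the tail of the perturbation schedule is made smaller than $\mu_i/4$. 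That $h$ remains an intrinsic isometry uses Lemma~\ref{Petrunin} on the restrictions $g_i = h_i|_{U_i}$ to an increasing exhaustion by finite unions of simplices. All the corrugation machinery is hidden inside the Akopyan black box, so no matching along the singular skeleton needs to be arranged by hand. Your sketch leaves both the matching problem and the convergence of your metric-defect decomposition unproved, and these are precisely the issues the alternating-sequence-plus-Petrunin argument is built to dispatch.
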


\begin{cor}

Let $\P$ be an $n$-dimensional Euclidean polyhedron.  Then $\P$ admits a pl intrinsic isometric embedding into $\E^{3n}$ and a continuous intrinsic isometric embedding into $\E^{2n+1}$.

\end{cor}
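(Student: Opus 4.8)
The plan is to obtain both assertions as immediate specializations of Theorem~\ref{Main Theorem - Euclidean}. To invoke that theorem one must supply a short map $f:\P \to \E^N$, a triangulation $\T$ of $\P$, a base vertex $v$ of $\T$, and a sequence $\{\e_i\}$ of positive reals; none of these data impose any real constraint. A triangulation exists by the very definition of a Euclidean polyhedron, and (assuming $\P \neq \emptyset$, the statement being vacuous otherwise) we may pick any vertex $v$ of $\T$. For the short map, take $f$ to be the constant map onto the origin of $\E^N$: it is $0$-Lipschitz, hence short. The sequence $\{\e_i\}$ plays no role in an existence statement, so set $\e_i = 1$ for every $i$.

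With these choices in hand, apply part~(1) of Theorem~\ref{Main Theorem - Euclidean} with $N = 3n$ to produce a pl intrinsic isometric embedding $h:\P \to \E^{3n}$; the extra conclusion that $h$ approximates $f$ to within $\e_l$ on each shell $Sh^l(v)$ is simply discarded. Likewise, applying part~(2) with $N = 2n+1$ yields a continuous intrinsic isometric embedding $h:\P \to \E^{2n+1}$. Together these establish the corollary.

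I do not expect any genuine obstacle in this deduction: all of the mathematical content resides in Theorem~\ref{Main Theorem - Euclidean}. The only points meriting even a word of care are the trivial verification that a short map into the target always exists (handled by the constant map) and the equally trivial observation that the per-shell approximation guaranteed by the theorem is a stronger conclusion than, and hence implies, the bare existence statement claimed here.
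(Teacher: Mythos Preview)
Your deduction is correct and is exactly the immediate specialization the paper intends: the corollary is stated right after Theorem~\ref{Main Theorem - Euclidean} with no separate proof, so supplying a constant (hence short) map, any triangulation with a chosen vertex, and any positive sequence~$\{\e_i\}$ and then discarding the approximation clause is precisely the intended argument.
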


Theorem \ref{Main Theorem - Euclidean} will be proved in Section \ref{Proof of the Main Theorem}, and all necessary terminology will be defined in Sections \ref{Preliminaries} and \ref{Key Lemmas}.

Akopyan's results also hold for spherical and hyperbolic polyhedra (with a slight caveat necessary for spherical polyhedra).  In a similar fashion, the proof of Theorem \ref{Main Theorem - Euclidean} (2) also goes through for spherical and hyperbolic polyhedra.  This is part (2) below.  Unfortunately, our proof for Theorem \ref{Main Theorem - Euclidean} (1) does not go through perfectly.  It does yield a somewhat interesting analogue for the general curvature case though, which is (1) below.  But what is neat about the general curvature case is that we can use the Nash-Kuiper $C^1$ isometric embedding Theorem (see \cite{Nash1} and \cite{Kuiper}) to obtain the third part of the following:

\begin{theorem}\label{Main Theorem - General Curvature}

Let $\P$ be an $n$-dimensional polyhedron with curvature $k$, let $f:\P \rightarrow \mathbb{M}^N_k$ be a short map (which, in the case when $k > 0$, is not surjective), and let $\{ \e_i \}_{i = 1}^{\infty}$ be a sequence of positive real numbers.  Fix a triangulation $\T$ of $\P$ and let $v$ be a fixed vertex of $\T$.  Then

	\begin{enumerate}
	
	\item There exists a pl intrinsic isometric embedding of $\P$ into either $\mathbb{M}^{2n}_k \times \mathbb{M}^n_k$ if $k \geq 0$ or $\mathbb{M}^{2n + 1}_k \times \mathbb{M}^n_k$ if $k<0$.
	
	\item There exists an (continuous) intrinsic isometric embedding $h: \P \rightarrow \mathbb{M}^N_k$ which is an $\e_l$ approximation of $f$ within $Sh^l(v)$ provided $N \geq 2n + 1$ if $k \geq 0$ or $N \geq 2n+2$ if $k < 0$.
	
	\item Every $n$-dimensional polyhedron of curvature $k$ admits an intrinsic isometric embedding into $\mathbb{E}^{2n + 2}$ if $k \geq 0$ or into $\E^{2n+3}$ if $k<0$.
	
	\end{enumerate}

\end{theorem}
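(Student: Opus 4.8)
\emph{Overview.} The plan is to establish the three parts in the order (2), (1), (3): part (3) will be deduced from part (2) together with Theorem~\ref{Main Theorem - Euclidean}(2) when $k=0$, and part (1) is what survives of the argument for Theorem~\ref{Main Theorem - Euclidean}(1) once one identifies the step that breaks in nonzero curvature.

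\emph{Part (2).} I would run the proof of Theorem~\ref{Main Theorem - Euclidean}(2) verbatim with $\E^N$ replaced by $\mathbb{M}^N_k$ throughout. That proof produces its continuous intrinsic isometric embedding as a limit of maps built by a sequence of local corrections, each supported in a small ball of the target and each (via the single-simplex constructions of Section~\ref{Key Lemmas}) trading a piece that is strictly short on each simplex for one closer to a path isometry while keeping the map injective. Since $\mathbb{M}^N_k$ is homogeneous, every such ball is isometric to a fixed model ball, so the combinatorial bookkeeping is unchanged; and when $k>0$ the hypothesis that $f$ is not surjective is exactly what guarantees there is always room in the target for the next correction. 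The one genuinely new task is to re-prove the lemmas of Section~\ref{Key Lemmas} inside a ball of $\mathbb{M}^N_k$ in place of $\E^N$; this is where the extra dimension in the case $k<0$ is spent, the curved local model of a simplex needing slightly more codimension than the flat one. I expect this curved re-derivation to be the principal technical obstacle in the theorem.

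\emph{Part (1).} Here I would try to imitate the proof of Theorem~\ref{Main Theorem - Euclidean}(1), whose mechanism, after writing $\E^{3n}=\E^{2n}\times\E^n$, is: pick a generic strictly short pl map $e\colon\P\to\E^{2n}$, so that its double-point set is discrete; note that $g_\P-e^{\pull}g_{\E^{2n}}$ is again a flat polyhedral metric on $\P$; apply (a slight strengthening of) Akopyan's theorem \cite{Akopyan} to this residual metric to obtain a pl intrinsic isometry $\phi\colon\P\to\E^n$, chosen close to a short map that separates the double points of $e$; and check that $(e,\phi)$ is then a pl intrinsic isometric embedding. In curvature $k\neq 0$ the obstruction appears at the second step: $e^{\pull}g_{\mathbb{M}^{2n}_k}$ is the pullback of a constant-curvature-$k$ metric by a map that is geodesic on simplices, which in general is neither flat nor of constant curvature, so $g_\P-e^{\pull}g_{\mathbb{M}^{2n}_k}$ need not be a curvature-$k$ polyhedral metric and Akopyan's curved theorem no longer applies. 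The content of part (1) is that the construction can nevertheless be carried out with second factor $\mathbb{M}^n_k$, at the cost of landing in the product $\mathbb{M}^{2n}_k\times\mathbb{M}^n_k$ (respectively $\mathbb{M}^{2n+1}_k\times\mathbb{M}^n_k$ when $k<0$); pinning down exactly which residual metrics still admit an Akopyan-type intrinsic isometry into $\mathbb{M}^n_k$ is the subtle point, and is precisely the sense in which the flat argument ``does not go through perfectly.'' A product of nonflat model spaces has nonconstant curvature, so this is genuinely weaker than a single-model-space statement; it matches the flat case only in total dimension.

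\emph{Part (3).} Given part (2) this is essentially formal. For $k>0$: apply part (2) with $N=2n+1$ to a constant map $\P\to\mathbb{M}^{2n+1}_k$ (which is short and, since $\dim\P=n<2n+1$, not surjective) to get a continuous intrinsic isometric embedding $h\colon\P\to\mathbb{M}^{2n+1}_k$, and compose with the standard isometric inclusion of the round sphere $\mathbb{M}^{2n+1}_k$ into $\E^{2n+2}$, which is a smooth isometric embedding and hence a path isometry onto its image. For $k=0$, Theorem~\ref{Main Theorem - Euclidean}(2) already embeds $\P$ into $\E^{2n+1}\subseteq\E^{2n+2}$. For $k<0$: apply part (2) with $N=2n+2$ to a constant map to obtain $h\colon\P\to\mathbb{M}^{2n+2}_k$, then isometrically embed $\mathbb{M}^{2n+2}_k$ into $\E^{2n+3}$ as follows. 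The exponential map at a point identifies $\mathbb{M}^m_k$ diffeomorphically with $\mathbb{R}^m$, and in the resulting normal coordinates $g_{\mathbb{M}^m_k}$ dominates the flat metric (since $\sinh t\geq t$), so this identification $\mathbb{M}^m_k\to\E^m\subseteq\E^{m+1}$ is a strictly short smooth embedding; the Nash--Kuiper theorem \cite{Nash1,Kuiper}, in its form for open manifolds, then upgrades it to a $C^1$ isometric embedding $\mathbb{M}^m_k\hookrightarrow\E^{m+1}$. A $C^1$ isometric embedding preserves the length of every rectifiable curve, hence is a path isometry onto its image, so composing with $h$ for $m=2n+2$ yields the desired continuous intrinsic isometric embedding $\P\to\E^{2n+3}$. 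The only nonroutine point is that the version of Nash--Kuiper invoked must apply to noncompact manifolds such as $\mathbb{M}^m_k$; everything else is bookkeeping with compositions of path isometries.
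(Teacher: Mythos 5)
Your proposal tracks the paper's proof closely: parts (1) and (2) are obtained exactly as you describe, by rerunning the proof of Theorem~\ref{Main Theorem - Euclidean} with Corollary~\ref{embedding corollary} replaced by its curved analogue Corollary~\ref{embedding corollary - general curvature} (which is where the extra target dimension for $k<0$ enters), and part (3) composes the embedding from (2) with a path isometry of the model space into Euclidean space---the round sphere for $k>0$, Nash--Kuiper applied to a short model embedding (the paper uses the Poincar\'e disk, you use normal coordinates) for $k<0$. The one small divergence is your stated obstruction for (1): the paper attributes the product target to the bare fact that $\mathbb{M}^{3n}_k$ does not factor as $\mathbb{M}^{2n}_k\times\mathbb{M}^n_k$ when $k\neq 0$, so the coordinate-splitting that produces $h_1\oplus h_2$ has no single-model-space target to split, whereas you emphasize that the residual metric $g_\P - e^{\pull}g_{\mathbb{M}^{2n}_k}$ need not itself be of curvature $k$; both observations point to the same product conclusion, and you correctly identify this as the sense in which the flat argument ``does not go through perfectly.''
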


Theorem \ref{Main Theorem - General Curvature} will be proved in Section \ref{Proof of the Main Theorem}.

All of the results above apply to polyhedra which are locally finite.  But there are many places where mathematicians study polyhedra which are not locally finite, but rather have some other sort of ``compactness condition".  One very common example is that the polyhedron has only ``finitely many isometry types of simplices".  Such polyhedra are studied, for example, by Martin Bridson in \cite{Bridson} and \cite{BH}.  The above results apply to these types of polyhedra as well, and this is the topic of Section \ref{Polyhedra with Finitely Many Isometry Types of Simplices}.

Finally, additional improvements on the dimension of the target space can be made for polyhedra which are highly connected.  These improvements will be discussed in Section \ref{Improvements for Highly Connected Polyhedra}.

\subsection*{Acknowledgements} The author wants to thank Pedro Ontaneda, Ross Geoghegan, Tom Farrell, Mladen Bestvina, Anton Petrunin, and many others for helpful remarks and guidance during the writing of this article.  In particular, an email from Anton Petrunin and his unpublished lecture notes \cite{Petrunin2} were essential in helping the author locate and understand the contents of papers that only existed in Russian (see \cite{Akopyan} and \cite{Zalgaller}).  During the preparation of this paper the author received tremendous support from both Binghamton University and Alfred University.  This research was partially supported by the NSF grant of Tom Farrell and Pedro Ontaneda, DMS-1103335.

\section{Preliminaries}\label{Preliminaries}

\subsection{Polyhedra of Curvature $k$}

A \emph{polyhedron} $\P$ \emph{with (local) curvature $k$} is a metric space which admits a triangulation\footnote{All triangulations in this paper are simplicial} so that every $l$-dimensional simplix of the triangulation is affinely isometric to a simplex in either Euclidean space $\E^l$ (if $k = 0$), the $l$-sphere $\mathbb{S}^l_k$ of curvature $k$ (if $k > 0$), or hyperbolic space $\mathbb{H}^l_k$ of curvature $k$ (if $k < 0$).  If $k = 0$ we call $\P$ a \emph{Euclidean polyhedron}, if $k > 0$ we call $\P$ a \emph{spherical polyhedron}, and if $k < 0$ we call $\P$ a \emph{hyperbolic polyhedron}.  Until Section \ref{Polyhedra with Finitely Many Isometry Types of Simplices} we will assume that all polyhedra $\P$ admit a triangulation which is locally finite.  $\P$ has dimension $n$ if the maximal dimension of any simplex of a triangulation of $\P$ is $n$.  

Throughout the rest of this paper, $\mathbb{M}^n_k$ denotes the $n$-dimensional model space of curvature $k$.  More precisely, $\mathbb{M}^n_k$ is $\En$ if $k = 0$, $\mathbb{M}^n_k$ denotes $\mathbb{S}^n_k$ if $k > 0$, and $\mathbb{M}^n_k$ denotes $\mathbb{H}^n_k$ if $k < 0$.

\subsection{General Position}

A set of points in $\mathbb{R}^N$ is said to be in \emph{general position} if no $l + 1$ points lie on an $l - 1$ dimensional affine subspace for any $1 \leq l \leq N$.  Suppose $k$ and $N$ are integers with $k \leq N$.  A set of points in $\mathbb{R}^N$ is said to be in \emph{$k$-general position} if no $l + 1$ points lie on an $l - 1$ dimensional affine subspace for any $1 \leq l \leq k$.

Analoguously, if we think of $\mathbb{S}^N_k$ as a subspace of $\R^{N + 1}$ and if we think of points of $\mathbb{S}^N_k$ as vectors in $\R^{N + 1}$ whose initial point is the origin, a collection of points in $\mathbb{S}^N_k$ is said to be in \emph{general position} if every collection of no more than $N + 1$ points corresponds to a collection of vectors which is linearly independent.  We apply the same definition to points in $\mathbb{H}^N_k$ where we think of $\mathbb{H}^N_k$ as a subset of $\mathbb{R}^N$ using the upper half-plane model\footnote{Of course, this doesn't preserve the metric of $\mathbb{H}^N_k$.  But the metric plays no role in the definition of general position}.

An important Lemma, whose proof is contained in \cite{HY}, is the following.

\begin{lemma}\label{Key Lemma}

Let $\P$ be an $n$-dimensional Euclidean polyhedron with a fixed triangulation $\T$ whose vertex set is $\V$, and let $f: \P \rightarrow \E^N$ be a simplicial map (with respect to $\T$).  Let $f(\V)$ denote the collection of images of the vertices of $\T$.  If $f(\V)$ is in ($2n + 1$)-general position (so in particular we must have $N$ $\geq$ $2n + 1$) then $f$ is an embedding.

\end{lemma}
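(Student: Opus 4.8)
The plan is to show that a simplicial map whose vertex images are in $(2n+1)$-general position is automatically injective, and then — since $\P$ is compact or at least the map is continuous and we are working with a locally finite triangulation — conclude it is an embedding onto its image. So the real content is injectivity, and the strategy is to analyze what failure of injectivity would force on the vertex images.

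First I would recall the structure of a simplicial map: $f$ is determined by its values on $\V$, and on each closed simplex $\sigma = [v_0, \dots, v_l]$ of $\T$ it is the affine extension $f(\sum t_i v_i) = \sum t_i f(v_i)$. Suppose $f(x) = f(y) = p$ for $x \neq y$. Let $\sigma$ be the carrier of $x$ (the unique open simplex containing $x$) and $\tau$ the carrier of $y$; write $\sigma = [v_0, \dots, v_a]$ and $\tau = [w_0, \dots, w_b]$. Then $p$ lies in the affine span of $\{f(v_0), \dots, f(v_a)\}$ and also in the affine span of $\{f(w_0), \dots, f(w_b)\}$, expressed with strictly positive barycentric coordinates in each. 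The key combinatorial point is to bound $a + b$. Since $\P$ is $n$-dimensional, $a \le n$ and $b \le n$. I would split into two cases: either $\sigma$ and $\tau$ share a vertex or they do not. If $x \ne y$ lie in simplices that are faces of a common simplex of $\T$, then because $f$ restricted to that common simplex is affine and the vertex images there are affinely independent (a consequence of $(2n+1)$-general position, as $\le 2n+1$ of them are), $f$ is injective on that simplex, contradiction. So we may assume $\sigma$ and $\tau$ are not contained in a common simplex.

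Now the heart of the argument: consider the combined vertex set $\{f(v_0), \dots, f(v_a), f(w_0), \dots, f(w_b)\}$. After removing any shared vertices, this is a set of at most $a + b + 1 \le 2n + 1$ points, hence — by the hypothesis of $(2n+1)$-general position — it is affinely independent (here one must check that "no $l+1$ points on an $(l-1)$-plane for $l \le 2n+1$" is exactly the statement that every subset of size $\le 2n+1$ is affinely independent). But having $p$ expressible with positive barycentric coefficients on the $f(v_i)$ and also with positive coefficients on the $f(w_j)$ produces a nontrivial affine dependence among these combined points (subtract the two expressions for $p$; the coefficients do not all cancel precisely because $\sigma \ne \tau$ as simplices of $\T$, using that barycentric coordinates in an affinely independent set are unique). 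This contradicts affine independence. Therefore $f$ is injective. Then injective $+$ continuous $+$ (the domain being a locally finite, hence locally compact, and in the relevant sense proper polyhedron, with $f$ proper on compact pieces) gives that $f$ is a homeomorphism onto its image, i.e., an embedding. The parenthetical remark $N \ge 2n+1$ is immediate since one cannot have $2n+1$ points in general position in $\R^N$ unless $N \ge 2n$, and affine independence of $2n+1$ points forces $N \ge 2n$ — I would state the precise bound carefully.

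The main obstacle I expect is the bookkeeping around shared vertices and the precise extraction of the affine dependence: one needs to handle the case where $\sigma$ and $\tau$ overlap in some vertices, track which barycentric coordinates survive the subtraction, and argue that the surviving relation is genuinely nontrivial. This is where the assumption that $\sigma \ne \tau$ (equivalently $x$ and $y$ have distinct carriers, or the same carrier but then affine injectivity on it handles it) must be used cleanly, and where the count "$\le 2n+1$ combined vertices" must be verified in each sub-case rather than asserted. Everything else — affine extension, uniqueness of barycentric coordinates on an independent set, passing from injective to embedding — is routine, and I would cite \cite{HY} for the embedding conclusion as the excerpt does.
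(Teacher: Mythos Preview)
Your approach is correct and is essentially the same argument the paper indicates (it cites \cite{HY} for this lemma, and its own proof of the curvature-$k$ analogue, Lemma~\ref{Key Lemma - Spherical and Hyperbolic Polyhedra}, follows exactly the pattern you describe: pass to the carriers $\Delta_x$, $\Delta_y$, note $f(x)=f(y)$ forces an affine dependence among the combined vertex images, and count). One small slip: when $\sigma$ and $\tau$ share no vertices the combined set has $a+b+2 \le 2n+2$ points, not $a+b+1$, but this is precisely what $(2n+1)$-general position handles---taking $l=2n+1$ in the definition says no $2n+2$ points lie on a $2n$-plane, i.e.\ any $2n+2$ images are affinely independent.
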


\begin{cor}[Corollary of the proof of Lemma \ref{Key Lemma}]\label{Key Corollary}

Let $\P$, $\T$, $\V$, $f$ and $f(\V)$ be as in Lemma \ref{Key Lemma}.  If $f(\V)$ is in ($2n$)-general position then, for all $p \in \P$, $f|_{St(p)}$ is an embedding\footnote{Where $St(p)$ denotes the closed star of $p$ with respect to the triangulation $\T$.}.    

\end{cor}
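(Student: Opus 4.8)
The plan is to inspect the proof of Lemma \ref{Key Lemma} (as given in \cite{HY}) and isolate exactly where the full $(2n+1)$-general position hypothesis is used, replacing it by the weaker $(2n)$-general position hypothesis at the cost of restricting to stars. Recall that a simplicial map $f$ fails to be an embedding precisely when two disjoint closed simplices $\sigma, \tau$ of $\T$ have images $f(\sigma), f(\tau)$ that intersect. If $\dim \sigma = p$ and $\dim \tau = q$ with $p + q \leq 2n$, then $\sigma$ and $\tau$ together have at most $p + q + 2 \leq 2n + 2$ vertices; a nontrivial intersection of $f(\sigma)$ and $f(\tau)$ forces these $\leq 2n+2$ image points to span an affine subspace of dimension $\leq p + q \leq 2n$, i.e.\ to lie in an affine subspace of dimension one less than the number of points minus one — which is exactly what $(2n)$-general position (applied with $l = p+q+2 \leq 2n+2$, so $l - 1 \leq 2n+1$... here I would need to be careful about the index bookkeeping, but the point is that $p+q+2$ points spanning dimension $\le p+q$ violates general position at level $\le 2n$) prohibits. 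So under $(2n)$-general position, the only possible "bad pairs" are those with $p + q \geq 2n + 1$, which since $p, q \leq n$ forces $p = q = n$ and the two $n$-simplices to share... no, disjointness still holds; rather it forces the dimension count to be $2n+1$ or $2n+2$ points in dimension $2n+1$, which $(2n)$-general position does not rule out.

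The key observation is then that within a single closed star $St(p)$, any two disjoint simplices $\sigma, \tau \subseteq St(p)$ cannot both be $n$-dimensional unless... actually the crucial point is combinatorial: if $\sigma$ and $\tau$ are disjoint simplices both contained in $St(p)$, then each contains the carrier of $p$ in its closure, or more precisely every simplex of $St(p)$ has $p$ in its closed carrier; two simplices in $St(p)$ whose closures are disjoint would each have to avoid the vertices of the other, but $St(p)$ is the union of simplices containing (the carrier simplex of) $p$, so their closures all meet the carrier of $p$ and hence cannot be disjoint. Therefore there are no disjoint pairs of simplices inside a single closed star, so the only obstructions to $f|_{St(p)}$ being an embedding come from a simplex being mapped non-injectively onto itself or two simplices sharing a face having overlapping images beyond that face — and these are controlled by requiring the images of the vertices of each individual simplex (at most $n+1 \leq 2n$ of them for $n \geq 1$) to be in general position, which is guaranteed by $(2n)$-general position. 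I would assemble these observations: first quote the relevant intersection-counting step from the proof of Lemma \ref{Key Lemma}; second note that the only pairs requiring the full $(2n+1)$-level are pairs of disjoint $n$-simplices; third observe that no such disjoint pair exists inside a single closed star; and conclude.

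The main obstacle I anticipate is making the "no disjoint simplices in a closed star" claim precise and correct. One must use the right definition of closed star — $St(p)$ is the union of all closed simplices of $\T$ that contain $p$ — and verify that any two such closed simplices intersect (they all contain $p$ itself if $p$ is a vertex, or all contain the carrier simplex of $p$ in general, so pairwise intersections are nonempty). Once that is nailed down, the rest is just rerunning the general-position dimension count from \cite{HY} with the index $2n$ in place of $2n+1$, checking that every simplex pair one actually needs to separate involves at most $2n$ vertices, hence at most $2n$ in the relevant "no $l+1$ points in an $(l-1)$-plane" clause. A secondary subtlety is the degenerate case $n = 0$ (where $St(p)$ is a point and there is nothing to prove) and low-dimensional edge cases $p + q$ small, but these are handled trivially or absorbed into the hypothesis $N \geq 2n$.
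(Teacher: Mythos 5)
Your overall strategy is the right one — rerun the vertex-counting argument from the proof of Lemma \ref{Key Lemma}, and use the combinatorics of the closed star to bound the number of distinct vertices by $2n+1$ rather than $2n+2$. But the central combinatorial claim you lean on, namely that \emph{no two simplices of $St(p)$ are disjoint}, is false. Take a tetrahedron $\langle a,b,c,d\rangle$ and $p=a$: then $St(p)$ is the whole (closed) tetrahedron, and the faces $\langle b,c\rangle$ and $\langle d\rangle$ are both in $St(p)$ yet disjoint. The closed star is the union of all closed simplices containing $p$, but it also contains all \emph{faces} of those simplices, and those faces need not contain $p$ or meet one another. You actually begin with the correct weaker observation (``any two disjoint simplices $\sigma,\tau\subseteq St(p)$ cannot both be $n$-dimensional'') but then abandon it in favor of the stronger, false claim; that pivot is where the argument breaks.

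The fix is precisely the claim you started to state. Suppose $f(x)=f(y)$ with $x\neq y$ in $St(p)$, and let $\Delta_x,\Delta_y$ be the carrier simplices of dimensions $i,j$. The images of the vertices of $\Delta_x\cup\Delta_y$ are affinely dependent, and $(2n)$-general position forbids any affine dependence among at most $2n+1$ of the $f(\mathcal{V})$. So it suffices that $\Delta_x$ and $\Delta_y$ have at most $2n+1$ distinct vertices between them. If $i+j\leq 2n-1$ this is automatic ($i+j+2\leq 2n+1$). The only remaining case is $i=j=n$. But an $n$-simplex of $\mathcal T$ lying in $St(p)$ cannot be a \emph{proper} face of a simplex containing $p$ (there is no higher-dimensional simplex), so it must itself contain $p$. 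Hence both $\Delta_x$ and $\Delta_y$ contain $p$, they intersect in a common face, and therefore share at least one vertex — giving at most $(n+1)+(n+1)-1=2n+1$ distinct vertices. (You should also confirm the affine dependence is nontrivial when vertices are shared: if some vertex of $\Delta_x$ is not a vertex of $\Delta_y$ its coefficient is strictly positive and survives; if the vertex sets coincide then $\Delta_x=\Delta_y$ and $f$ is injective on a single simplex, forcing $x=y$.) Finally, the index bookkeeping in your first paragraph needs cleaning: the sentence ``the only possible bad pairs are those with $p+q\geq 2n+1$'' is inconsistent with $p,q\leq n$; the correct threshold is $i+j=2n$, i.e.\ $i=j=n$, exactly the case the star combinatorics dispatch.
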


As stated, Lemma \ref{Key Lemma} and the resulting Corollary only deal with Euclidean polyhedra.  In order to prove Theorem \ref{Main Theorem - General Curvature} we will need the following analogue for spherical and hyperbolic polyhedra:

\begin{lemma}\label{Key Lemma - Spherical and Hyperbolic Polyhedra}

Let $\P$ be an $n$-dimensional polyhedron of curvature $k$ with a fixed triangulation $\T$ whose vertex set is $\V$, and let $f: \P \rightarrow \mathbb{M}^N_k$ be a simplicial map (with respect to $\T$).  Let $f(\V)$ denote the collection of images of the vertices of $\T$.  If $f(\V)$ is in ($2n + 1$)-general position and $k > 0$ then $f$ is an embedding.  If $f(\V)$ is in ($2n + 2$)-general position and $k < 0$ then $f$ is an embedding.

\end{lemma}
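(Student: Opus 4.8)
The plan is to mimic the proof of Lemma \ref{Key Lemma} (as carried out in \cite{HY}) but keeping careful track of how the Euclidean linear-algebra arguments translate to the spherical and hyperbolic settings under the general-position conventions introduced above. Recall that in the Euclidean case the proof of Lemma \ref{Key Lemma} proceeds in two stages: first one shows that a simplicial map sending the vertex set into $(2n+1)$-general position is injective, and then one shows the inverse is continuous (which here is automatic since $\P$ is a locally finite polyhedron, hence the map restricted to each closed star is a homeomorphism onto its image and the images of disjoint simplices are disjoint). Injectivity is the only real content. Given two points $p,q \in \P$ lying in simplices $\sigma$ and $\tau$ with $f(p)=f(q)$, one observes that the union $\sigma \cup \tau$ has at most $2(n+1) = 2n+2$ vertices, and that $f(p)$ being a common point of the affine spans $f(\text{aff}(\sigma))$ and $f(\text{aff}(\tau))$ forces an affine dependence among these $\le 2n+2$ image vertices; $(2n+1)$-general position rules out all such dependences unless the two spans intersect in the image of a common face, at which point one reduces to that face and concludes.

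First I would set up the spherical case $k>0$. Here we think of $\mathbb{S}^N_k \subset \R^{N+1}$ and a point of the sphere as a unit vector from the origin; a geodesic simplex with vertices $u_0,\dots,u_l$ is the radial projection of the convex hull of $u_0,\dots,u_l$ (which makes sense precisely because $f$ is not required to be surjective, so all vertices lie in an open hemisphere and the $u_i$ are positively spanning a simplicial cone). The key translation is: the linear span of $\{u_0,\dots,u_l\}$ in $\R^{N+1}$ plays the role of the affine span in the Euclidean picture, and ``$(2n+2)$ points span a subspace of dimension $2n+2$'' is exactly the $(2n+1)$-general position hypothesis as defined above (no more than $N+1$ points are linearly independent, applied to collections of size up to $2n+2$). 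So if $f(p)=f(q)$ with $p\in\sigma$, $q\in\tau$, the common image vector lies in $\text{span}(f(V(\sigma))) \cap \text{span}(f(V(\tau)))$; general position forces this intersection to be $\text{span}(f(V(\sigma\cap\tau)))$, and then one descends to the common face $\sigma\cap\tau$ exactly as in the Euclidean argument. The continuity of the inverse and the disjointness of images of non-adjacent simplices follow from the same hemisphere/positive-cone bookkeeping.

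Next I would handle the hyperbolic case $k<0$, which is where the dimension jumps to $2n+2$. Using the upper half-space model $\mathbb{H}^N_k \subset \R^N$ as the excerpt prescribes, a geodesic simplex is not an affine simplex, so one cannot directly reuse a span argument in $\R^N$. The clean fix is to pass to the hyperboloid model $\mathbb{H}^N_k \subset \R^{N+1}$ with the Lorentzian form, where geodesic simplices again become radial projections of linear cones and ``general position'' becomes a statement about linear independence of the $\le 2n+2$ lifted vertex vectors in $\R^{N+1}$; I would first check that the two notions of general position (upper-half-space vs.\ hyperboloid) agree, or rather that the hyperboloid-independence is implied by the stated hypothesis — this is where the extra $+1$ is consumed, because the Lorentzian setting effectively costs one dimension relative to the round sphere. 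Then the injectivity argument runs as before. The main obstacle I expect is exactly this accounting: verifying that $(2n+2)$-general position (as defined via the upper half-space model) really does guarantee the linear-independence facts needed on the hyperboloid for all the simplices that can meet at a point — i.e., that collections of up to $2n+2$ vertex images behave as ``generically'' as in the compact spherical case despite the indefinite metric — and making sure no degenerate configuration (e.g.\ vertices whose lifts are Lorentz-orthogonal in a way that mimics a linear dependence) slips through. Everything after that is a line-by-line transcription of the proof of Lemma \ref{Key Lemma} from \cite{HY}.
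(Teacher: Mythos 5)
Your spherical argument matches the paper's: regard $\mathbb{S}^N_k \subset \R^{N+1}$, note that a geodesic simplex is the radial projection of the Euclidean simplex on its vertices, so $f(x)=f(y)$ yields a nontrivial linear relation among the at most $2n+2$ vertex-image vectors in $\R^{N+1}$, which $(2n+1)$-general position forbids since $N+1\geq 2n+2$. Nothing to add there.

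For $k<0$ your route diverges, and you have put your finger on a real subtlety that the paper's proof passes over in silence. The paper simply repeats the same linear-dependence argument inside $\R^N$ (taking $M=N$), asserting that $f(x)=f(y)$ makes the vertex images linearly dependent. But in the upper half-space (Poincar\'e) model geodesics are circular arcs, so a geodesic simplex is not the Euclidean convex hull of its vertices, and a common image point does not visibly produce the required linear relation; the paper's argument only works as written if one silently reinterprets ``upper half-plane model'' as the Klein (projective ball) model, where geodesics are Euclidean line segments in $\R^N$ and the linear-dependence step is literal. Your proposed fix --- lifting to the hyperboloid in $\R^{N+1}$ --- is geometrically cleaner and transplants the spherical argument verbatim, but it opens a gap you acknowledge and do not close: the lemma's hypothesis is general position phrased via $\R^N$ coordinates, and you would have to show this implies linear independence of the hyperboloid lifts in $\R^{N+1}$, which is not automatic because the change of model is nonlinear. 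Two further comments. First, your explanation that the ``$+1$ is consumed by the Lorentzian setting'' is not quite right: if one ran the hyperboloid argument with a hyperboloid-native notion of general position, the bound would be $N+1\geq 2n+2$, i.e.\ $N\geq 2n+1$, same as the sphere; the extra $+1$ in the lemma is an artifact of defining general position inside $\R^N$, not of the indefinite form. Second, until the translation between the two general-position notions is supplied, your proof has a hole comparable in size to the imprecision in the paper's own treatment of the hyperbolic case.
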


\begin{cor}[Corollary of the proof of Lemma \ref{Key Lemma - Spherical and Hyperbolic Polyhedra}]\label{Key Corollary - Spherical and Hyperbolic Polyhedra}

Let $\P$, $\T$, $\V$, $f$ and $f(\V)$ be as in Lemma \ref{Key Lemma - Spherical and Hyperbolic Polyhedra}.  If $f(\V)$ is in ($2n$)-general position and $k > 0$ then, for all $p \in \P$, $f|_{St(p)}$ is an embedding.  If $f(\V)$ is in ($2n + 1$)-general position and $k < 0$ then, for all $p \in \P$, $f|_{St(p)}$ is an embedding.    

\end{cor}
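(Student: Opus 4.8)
The plan is to read the statement off the injectivity argument behind Lemma~\ref{Key Lemma - Spherical and Hyperbolic Polyhedra}, exactly as Corollary~\ref{Key Corollary} is read off the proof of Lemma~\ref{Key Lemma}. First observe that it suffices to show $f|_{St(p)}$ is injective: local finiteness of $\T$ makes $St(p)$ a finite subcomplex, so $St(p)$ is compact, and a continuous injection of a compact space into the Hausdorff space $\mathbb{M}^N_k$ is automatically an embedding.

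Next recall how injectivity is proved for Lemma~\ref{Key Lemma - Spherical and Hyperbolic Polyhedra}. If $f$ were not injective there would be points $x\neq y$ with $f(x)=f(y)$; letting $\sigma$ and $\tau$ be the carriers of $x$ and $y$ (the smallest simplices of $\T$ containing them), the equality $f(x)=f(y)$ forces a nontrivial linear dependence among the vectors of $\mathbb{M}^N_k$ representing the images of the vertices in $V(\sigma)\cup V(\tau)$ --- the scalar tying the two sides together being forced positive because a point of $\mathbb{M}^N_k$ pins down its representing vector only up to a positive multiple. Since $\sigma$ and $\tau$ can be disjoint $n$-simplices, up to $(n+1)+(n+1)=2n+2$ vertex-images enter this dependence, and the hypothesis of $(2n+1)$-general position (when $k>0$) or $(2n+2)$-general position (when $k<0$) is precisely what is needed to preclude it.

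Now run the same argument with $x,y\in St(p)$. Then $\sigma$ and $\tau$ are faces of simplices $\widehat{\sigma},\widehat{\tau}$ of $St(p)$ which, by definition of the closed star, may be chosen to contain the carrier $\rho$ of $p$ as a face; in particular $\widehat{\sigma}$ and $\widehat{\tau}$ share the nonempty vertex set $V(\rho)$, so
\[
  |V(\sigma)\cup V(\tau)| \;\le\; |V(\widehat{\sigma})\cup V(\widehat{\tau})| \;\le\; (n+1)+(n+1)-|V(\rho)| \;\le\; 2n+1 .
\]
Hence at most $2n+1$ vertex-images enter the forbidden dependence, so excluding it costs one fewer degree of general position: $(2n)$-general position when $k>0$ and $(2n+1)$-general position when $k<0$, exactly as claimed. (The case $\sigma=\tau$, i.e.\ $x$ and $y$ lying in a common simplex, is even cheaper --- it only uses that the $\le n+1$ vertex-images of that simplex are independent, which the stated hypotheses certainly give.)

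The one step that carries any weight is the one already internal to the proof of Lemma~\ref{Key Lemma - Spherical and Hyperbolic Polyhedra}: making precise, in the spherical and hyperbolic models of $\mathbb{M}^N_k$, the implication ``$f(x)=f(y)\Rightarrow$ nontrivial linear dependence among the vertex-images'', which requires cancelling the contribution of vertices common to $\sigma$ and $\tau$, checking that the surviving relation is nontrivial (equivalently, that its triviality would force $x=y$), and keeping track of the positive scalar mentioned above. Granting that --- which is exactly what Lemma~\ref{Key Lemma - Spherical and Hyperbolic Polyhedra} supplies --- the corollary follows from the vertex count displayed above with no further input.
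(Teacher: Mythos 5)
Your proof is correct and is exactly the argument the paper intends: the paper explicitly omits a proof of this corollary, stating only that it ``follows directly'' from the proof of Lemma~\ref{Key Lemma - Spherical and Hyperbolic Polyhedra}, and you have supplied the missing vertex count. The key point --- that any simplex of $St(p)$ contains the carrier $\rho$ of $p$ as a face, so the two relevant simplices share at least one vertex and hence at most $2n+1$ vertex-images can enter the forbidden linear dependence --- is precisely the reduction by one degree of general position that the corollary claims.
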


We prove Lemma \ref{Key Lemma - Spherical and Hyperbolic Polyhedra}, the resulting Corollary follows directly.  The following proof is very similar to the Euclidean case from \cite{HY}

\begin{proof}[Proof of Lemma \ref{Key Lemma - Spherical and Hyperbolic Polyhedra}]

Let $\P$ be an $n$-dimensional polyhedra of curvature $k$ and let $n_k$ be either $2n+1$ if $k>0$ or $2n+2$ of $k<0$.  Until the end of the proof we will treat both cases the same.  Let $f: \P \rightarrow \mathbb{M}^{N}_k$ be a simplicial map which maps the vertices of $\T$ to points which are in $n_k$ general position (so $N \geq n_k$).  It is clear that $f$ is an embedding when restricted to any simplex of $\T$.  So suppose $f$ is not an embedding.  Then there exists $x, y \in \P$ such that $f(x) = f(y)$, and thus we have that $x$ and $y$ are in different simplices of $\T$.

Let $\Delta_x$ ($\Delta_y$) denote the unique simplex of $\T$ containing $x$ ($y$) in its interior (where we consider a vertex to be interior to itself).  Let $i$ denote the dimension of $\Delta_x$ and similarly $j$ for $\Delta_y$.  Denote the vertices of $\Delta_x$ by $<v_0, ..., v_i>$ and of $\Delta_y$ by $<w_0, ..., w_j>$.  Thinking of $\mathbb{M}^{N}_k$ as living in $\mathbb{R}^M$ (for either $M = N$ if $k < 0$ or $M = N + 1$ if $k > 0$) we can treat $f(v_0), ..., f(v_i), f(w_0), ..., f(w_j)$ as vectors whose initial point is the origin.  Since $f(x) = f(y)$ we must have that this collection of $i + j + 2$ vectors is linearly dependent.  So we have $i + j + 2 \leq 2n + 2$ vertices whose images under $f$ are not affinely independent.

If $k > 0$, then $M = N + 1 \geq n_k + 1 = 2n + 2$.  If $k > 0$, then $M = N \geq n_k = 2n + 2$.  So in either case, the images of at most $2n + 2$ vertices not being in general position contradict our assumption on $f$.  Therefore, the map $f$ is an embedding.

\end{proof}

\subsection{Pullback Metrics and Intrinsic Isometries}

What follows is almost directly from \cite{Petrunin1}.

Let $(\X, \dx)$ and $(\Y, \dy)$ be metric spaces and $f:\X \rightarrow \Y$ a continuous map.  $f$ is \emph{short} if for any two points $x, x' \in \X$ we have that $d_{\Y}(f(x), f(x')) \leq d_{\X}(x, x')$, and $f$ is \emph{strictly short} if $d_{\Y}(f(x), f(x')) < d_{\X}(x, x')$ for any $x, x' \in \X$ with $x \neq x'$.  

Now, given two points $x, x' \in \X$, a sequence of points $x = x_0, x_1, ..., x_{k - 1}, x_k = x'$ is called an \emph{$\e$-chain} from $x$ to $x'$ if $\dx(x_{i - 1}, x_i) \leq \e$ for any $i$.  Define:

$$ \text{pull}_{f,\e}(x, x') := \text{inf} \left\{ \sum_{i = 1}^{k} \dy(f(x_{i - 1}), f(x_i))   \right\} $$ 

\n where the infimum is taken over all $\e$-chains from $x$ to $x'$.  It is not hard to see that for any $\e > 0$ pull$_{f, \e}$ is almost a metric on $\X$.  The only issue is that we may have pull$_{f, \e} (x, x') = 0$ for some $x \neq x'$.  But pull$_{f, \e}$ is clearly monotone nonincreasing with respect to $\e$.  So it makes sense to define the limit

$$ \text{pull}_f (x, x') := \lim_{\e \to 0} \text{pull}_{f, \e}(x, x') $$ 

\n where this limit may be infinite.  We call pull$_f$ the \emph{pullback metric} for $f$. 

A map $f: \X \rightarrow \Y$ is an \emph{intrinsic isometry} if

$$ \dx(x, x') = \text{pull}_f(x, x') $$

\n for all $x, x' \in \X$. 

In Section \ref{Proof of the Main Theorem} we will need the following Lemma which is proved in \cite{Petrunin1}.

\begin{lemma}\label{Petrunin}

Let $\X$ and $\Y$ be metric spaces with $\X$ compact and let a continuous map $f:\X \rightarrow \Y$ be such that

$$ \sup_{x, x' \in \X} \pull_f(x, x') < \infty. $$

\n Then for any $\e > 0$ there exists $\d = \d(f,\e) > 0$ such that for any short map $h:\X \rightarrow \Y$ satisfying 

$$ \dy(f(x), h(x)) < \d \text{ for any } x \in \X $$
\n we have that

$$ \pull_f(x, x') < \pull_h(x, x') + \e $$

\n for any $x, x' \in \X$

\end{lemma}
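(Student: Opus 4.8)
The plan is to argue by contraposition: fix $\e>0$, and suppose that for some $\d>0$ (to be specified) there is a short map $h:\X\to\Y$ with $\dy(f(x),h(x))<\d$ for all $x$ but $\pull_f(x_0,x_0')\ge\pull_h(x_0,x_0')+\e$ for some $x_0,x_0'\in\X$; I will show this forces $\d$ to exceed an explicit positive quantity depending only on $f$ and $\e$, which is exactly the contrapositive. Throughout, write $\rho_g(a,b):=\dy(g(a),g(b))$ for $g\in\{f,h\}$, so that $\pull_{g,\gamma}(a,b)$ is the infimum of $\sum_i\rho_g(y_{i-1},y_i)$ over $\gamma$-chains $a=y_0,\dots,y_k=b$. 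Two elementary remarks will be used repeatedly: each $\rho_g$ is a pseudometric on $\X$ (triangle inequality inherited from $\Y$); and the triangle inequality in $\Y$ gives both $\rho_f(a,b)-\rho_h(a,b)\le\dy(f(a),h(a))+\dy(f(b),h(b))<2\d$ for all $a,b$, and $\rho_f(a,b)<\dx(a,b)+2\d$ because $h$ is short.

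The key combinatorial device is a shortcutting argument. Given a $\gamma$-chain $y_0,\dots,y_k$ and indices $i<j$ with $j\ge i+2$ and $\dx(y_i,y_j)\le\gamma$, deleting $y_{i+1},\dots,y_{j-1}$ produces another $\gamma$-chain, and by the triangle inequality for $\rho_g$ the sum $\sum_i\rho_g(y_{i-1},y_i)$ does not increase. Iterating until no such pair remains, and using that $\X$ is compact (hence totally bounded), every near-minimiser of $\pull_{h,\gamma}(a,b)$ can be replaced by a $\gamma$-chain with at most $N_\gamma<\infty$ steps, where $N_\gamma$ depends only on $\gamma$ and the covering numbers of $\X$; the same applies to $\pull_{f,\gamma}$. (Since $\sup\pull_f<\infty$, $\gamma$-chains between any two points exist for every $\gamma>0$, so these infima are finite and are genuinely approximated by finite chains.)

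With these in hand, I would fix once and for all a scale $\gamma=\gamma(f,\e)>0$ with
\[ \pull_{f,\gamma}(a,b)\ \ge\ \pull_f(a,b)-\tfrac{\e}{4}\qquad\text{for all }a,b\in\X, \]
take a $\gamma$-chain $x_0=z_0,\dots,z_M=x_0'$ realising $\pull_{h,\gamma}(x_0,x_0')$ to within $\e/8$ and shortcut it so that $M\le N_\gamma$. Since this is a $\gamma$-chain, on the one hand $\sum_i\rho_f(z_{i-1},z_i)\ge\pull_{f,\gamma}(x_0,x_0')\ge\pull_f(x_0,x_0')-\e/4$, and on the other hand $\sum_i\rho_h(z_{i-1},z_i)\le\pull_{h,\gamma}(x_0,x_0')+\e/8\le\pull_h(x_0,x_0')+\e/8\le\pull_f(x_0,x_0')-7\e/8$. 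Subtracting and bounding each step by $\rho_f(z_{i-1},z_i)-\rho_h(z_{i-1},z_i)\le\dy(f(z_{i-1}),h(z_{i-1}))+\dy(f(z_i),h(z_i))$ gives $\tfrac{5\e}{8}\le 2\sum_{i=0}^{M}\dy(f(z_i),h(z_i))$, hence $\dy(f(z_i),h(z_i))\ge\frac{5\e}{16(M+1)}\ge\frac{5\e}{16(N_\gamma+1)}$ for some $i$. Taking $\d<\frac{5\e}{16(N_\gamma+1)}$ contradicts $\dy(f(x),h(x))<\d$, finishing the contrapositive.

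The hard part — and the only place where $\sup\pull_f<\infty$ and the compactness of $\X$ are genuinely needed — is producing the uniform scale $\gamma$ above, i.e.\ upgrading the pointwise limit $\pull_{f,\gamma}\nearrow\pull_f$ (as $\gamma\searrow 0$) to convergence uniform on $\X\times\X$. This is not automatic from $\sup\pull_f<\infty$ alone: an ``infinite hedgehog'' (segments of lengths $1/m$ joined at one point, with $f$ oscillating on the $m$-th segment with fixed total variation but amplitude $\to 0$) has bounded pullback yet non-uniform stabilisation. The point that saves the lemma is that a short map $h$ can be $\d$-close to $f$ only if $f$ is itself ``$2\d$-almost short'', $\rho_f(a,b)<\dx(a,b)+2\d$; this caps the amplitude of every small-scale oscillation of $f$ by roughly $2\d$, and together with $\sup\pull_f<\infty$ and total boundedness of $\X$ it controls both the number and the fineness of the oscillation sites, yielding a threshold $\d_0(f,\e)>0$ and a scale $\gamma(f,\e)>0$ that works uniformly for all $\d<\d_0(f,\e)$. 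One then sets $\d:=\min\{\d_0(f,\e),\ \frac{5\e}{16(N_{\gamma(f,\e)}+1)}\}$, which removes the apparent circularity since $\gamma$ has been fixed before $\d$. I expect verifying this uniform-stabilisation step to be the bulk of the work; the remaining estimates above are routine.
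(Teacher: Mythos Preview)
The paper does not give a proof of this lemma at all; it merely quotes the statement and cites \cite{Petrunin1} for the argument. So there is no ``paper's own proof'' to compare your proposal against, and I can only comment on the proposal itself.

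Your skeleton is sound: pass to a near-optimal $\gamma$-chain for $\pull_{h,\gamma}$, shortcut it so that alternate vertices are $\gamma$-separated (hence the length is bounded by a packing number $N_\gamma$ of $\X$), and compare the $f$- and $h$-sums step by step via $|\rho_f-\rho_h|<2\d$. Those manipulations are correct, and the resulting inequality $\pull_{f,\gamma}(x_0,x_0')\le\pull_h(x_0,x_0')+\e/8+2N_\gamma\d$ is exactly what one wants.

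The gap is the one you already name, and it is genuine: you need a \emph{single} scale $\gamma=\gamma(f,\e)$ with $\pull_{f,\gamma}\ge\pull_f-\e/4$ uniformly on $\X\times\X$, and your hedgehog correctly shows this can fail under the stated hypotheses. Your proposed rescue via ``$f$ is $2\d$-almost short'' is not carried out, and as written it points the wrong way: the inequality $\rho_f(a,b)<\dx(a,b)+2\d$ is an \emph{upper} bound on small-scale steps of $f$, whereas what you need to salvage the argument is a \emph{lower} bound on $\sum_i\rho_f(z_{i-1},z_i)$ along a $\gamma$-chain (to get close to $\pull_f$, not away from it). Note too that if for every $\d_0>0$ some short $h$ is $\d_0$-close to $f$, then letting $\d_0\to0$ in $\rho_f\le\rho_h+2\d_0\le\dx+2\d_0$ already forces $f$ itself to be short; so the dichotomy is really ``$f$ is short'' versus ``the lemma is vacuous for all small $\d$''. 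Even in the first branch, continuity of $\pull_f$ (hence uniform convergence via Dini, since each $\pull_{f,\gamma}$ is continuous and the family is monotone) still needs to be established from $\sup\pull_f<\infty$ and compactness, and you have not done this. Until that step is written out, the argument is a plausible outline rather than a proof; consulting Petrunin's original argument in \cite{Petrunin1} is the natural next step.
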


\subsection{A Slight Modification of Akopyan's Result}

In \cite{Akopyan} Akopyan proves the following Theorem:

\begin{theorem}[Akopyan]\label{Akopyan}

Let $\P$ be an $n$-dimensional polyhedron with curvature $k$.  Fix a triangulation $\T$ of $\P$ with vertex set $\V$ and let $\e > 0$.  Let $f:\P \rightarrow \mathbb{M}^N_k$ be a short map with $N \geq n$.  If $k > 0$ assume that the map $f$ is not surjective\footnote{This condition is necessary.  See \cite{Akopyan} for an example.}.  Then there exists a pl intrinsic isometry $h:\P \rightarrow \mathbb{M}^N_k$ such that $|f(x) - h(x)| < \e$ for all $x \in \P$.  

\end{theorem}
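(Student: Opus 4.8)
This is the main theorem of \cite{Akopyan}, and since the rest of this paper uses it as a black box I will only sketch the shape of the argument one would give. The plan is to first pass to a combinatorial model: each closed simplex of $\T$ is compact and $f$ is uniformly continuous on it, so after replacing $\T$ by a fine enough subdivision $\T'$ one finds a map $f_0$, simplicial with respect to $\T'$, with $|f(x) - f_0(x)| < \e/2$ for all $x$; a small generic perturbation of the vertex images $f_0(\V')$ then makes $f_0$ non-degenerate and strictly short on every simplex (only finitely many open conditions are imposed near each vertex, so there is room even when $N = n$, using that $\T'$ is locally finite). When $k > 0$ one uses the hypothesis that $f$ is not surjective to keep $f_0$ non-surjective, so that every image simplex sits inside a region of $\mathbb{M}^N_k$ on which short geodesics between nearby points are unique and vary continuously with their endpoints; this is what makes the subsequent local, essentially Euclidean, construction legitimate on the sphere.

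Next comes the analytic core: a strictly short simplicial map should be replaceable, with the vertex images held fixed, by a pl map $h$ satisfying $\pull_h = d_\P$ and $|h - f_0| < \e/2$. In dimension one this is easy: an edge of length $\ell$ whose endpoints lie a distance $\ell' < \ell$ apart is sent to a geodesic zigzag between those endpoints of total length exactly $\ell$ and oscillation less than $\e/2$ about the original geodesic, so that the pull-back length of the edge is the total variation of the zigzag, namely $\ell$. In higher dimensions a single family of pleats corrects the pull-back metric in only one direction, so I would pleat successively along finitely many directions, enough to account for the metric defect of each simplex, each pleating fine enough to (i) bring the pull-back length in its direction up to the correct value, (ii) not spoil the corrections already made, and (iii) move points by less than a prescribed small amount. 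When the codimension $N - n$ is not small --- the only regime the rest of this paper needs --- each new family of pleats may be placed in a fresh transverse coordinate, so that (ii) is automatic; pushing the argument all the way down to the stated bound $N \geq n$, with no spare coordinates and the pleats instead nested at many scales inside a single $\mathbb{M}^N_k$, is where Akopyan's real technical work is spent.

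The hard part, I expect, is verifying that the corrugated map is genuinely an intrinsic isometry: that for every $x, x'$ and every $\d > 0$ some $\d$-chain has $h$-image of length within $\d$ of $d_\P(x, x')$. This is what forces the pleatings on two simplices sharing a face to match up along that face and forces (ii) to hold exactly rather than approximately. Granting it, one has $\pull_h \leq d_\P$ because the construction keeps $h$ short in the metric-derivative sense, hence does not increase the length of any path, while $\pull_h \geq d_\P$ follows from the length bookkeeping of the corrugation; so $\pull_h = d_\P$, and adding the two displacement budgets gives $|f(x) - h(x)| < \e$. Finally, for non-compact $\P$ one runs the corrugation over an exhaustion of $\P$ by finite subcomplexes, at each stage pleating only the newly added simplices and leaving the earlier ones untouched; local finiteness of $\T'$ makes the resulting map pl, and since no point is displaced at more than one stage the total displacement can be kept below $\e$.
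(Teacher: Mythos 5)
You correctly treat this as a cited black-box result: the paper never proves Theorem \ref{Akopyan}; it attributes it to \cite{Akopyan}, and the only account it gives of the method is the three-sentence outline in Section \ref{Polyhedra with Finitely Many Isometry Types of Simplices}. But the sketch you offer follows a genuinely different route from the one that outline describes. You propose an iterated pleating/corrugation argument in the spirit of Nash--Kuiper: approximate $f$ by a strictly short simplicial map, then stack finitely many families of zigzags, one per ``direction,'' to grow $\pull_h$ up to $d_{\P}$. According to the paper's own summary, Akopyan instead (i) builds local projections near faces of codimension at least two, (ii) rescales the composite so it is strictly short, and (iii) invokes Brehm's pl Kirszbraun theorem \cite{Brehm} skeleton by skeleton to produce, on each simplex, a piecewise-congruent extension of the map already fixed on its boundary. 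That is a substantively different engine: because Brehm's theorem is an \emph{extension} theorem, the boundary-matching problem you single out as ``the hard part'' is absorbed into the inductive skeleton-by-skeleton step rather than being something one has to engineer by hand, and it produces true piecewise isometries on each cell directly rather than approximate ones to be improved by further pleats.

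As a stand-alone proof your proposal also has a genuine gap, which you flag yourself: you give no mechanism for making the pleatings on two simplices sharing a face agree along that face, and no mechanism for nesting corrugations at many scales inside a single $\mathbb{M}^N_k$ when $N = n$ and there is no spare transverse coordinate. Those are precisely the points at which the corrugation heuristic stops being a proof, and they are precisely what the Brehm-based inductive extension is designed to dispatch. So your outline is a plausible alternative in spirit but it is not the cited argument, and it leaves open the two steps that carry the real technical weight.
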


In order to prove Theorem \ref{Main Theorem - Euclidean} (1) we need a slight stronger version of this result.  The problem with Theorem \ref{Akopyan} is that the $\e$-approximation is uniform across the whole polyhedron.  We need a relative version of this result which allows for $\e$ to taper to 0 as you move away from some fixed point of the polyhedron.  The statement that we need is below.  But before we state the version of Akopyan's Theorem that is needed, some terminology must first be introduced.

Let $\P$ be a polyhedron and let $x \in \P$.  Fix a triangulation $\T$ of $\P$.  For a vertex $v$ the closed star of $v$ will be denoted by $St(v)$.  We define $St^2(v) := \bigcup_{u \in St(v)} St(u)$ and for any $k \in \mathbb{N}$ we recursively define $St^{k + 1}(v) := \bigcup_{u \in St^k(v)} St(u)$.  Then define the \emph{$k^{th}$ shell about $x$}, denoted by $Sh^k(x)$, recursively as:
		
			\begin{enumerate}
			
			\item $Sh^1(x) = St(x)$
			
			\item $Sh^k(x) = St^k(x) \setminus St^{k - 1}(x)$ for $k \geq 2$
			
			\end{enumerate}
			
	Notice that $Sh^k(x) \cap Sh^l(x) = \emptyset$ for $k \neq l$ and that $\bigcup_{i = 1}^{\infty} Sh^i(x) = \P$.  Also note that $St^k(x)$ and $Sh^k(x)$ both depend on the triangulation that we are considering.  If we want to emphasize the triangulation, then we will put it as a subscript.  So $St^k_{\T}(x)$ and $Sh^k_{\T}(x)$ denote the $k^{th}$ closed star and the $k^{th}$ shell of $x$ with respect to $\T$, respectively.  The relative version of Akopyan's Theorem is as follows:

\begin{theorem}[Relative Version of Akopyan's Theorem]\label{Relative Akopyan}

Let $\P$ be an $n$-dimensional polyhedron with curvature $k$.  Fix a triangulation $\T$ of $\P$ with vertex set $\V$ and let $\{ \e_i \}_{i = 1}^{\infty}$ be a sequence of positive real numbers.  Let $f:\P \rightarrow \mathbb{M}^N_k$ be a short map with $N \geq n$ and fix a vertex $v \in \V$.  If $k > 0$ assume that the map $f$ is not surjective.  Then there exists a pl intrinsic isometry $h:\P \rightarrow \mathbb{M}^N_k$ such that for any $l \in \mathbb{N}$ and for any $x \in Sh^l(v)$, $|f(x) - h(x)| < \e_l$.  

\end{theorem}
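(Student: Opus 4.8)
The plan is to deduce the relative version from the original (uniform) Akopyan Theorem \ref{Akopyan} by a partition-of-unity-style bootstrapping argument across the shells $Sh^l(v)$, chaining together countably many applications of the uniform result on nested finite subpolyhedra and controlling the errors so that they taper according to the prescribed sequence $\{\e_i\}$.

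First I would set up the exhaustion. Since $\P$ is locally finite, each $St^l(v)$ is a finite (hence compact) subpolyhedron, and $\P = \bigcup_l St^l(v)$ with $Sh^l(v) = St^l(v) \setminus St^{l-1}(v)$. I would build $h$ as a limit of pl intrinsic isometries $h_l$, where $h_l$ agrees with the final map on $St^{l-1}(v)$ (so that it is ``frozen'' there) and is modified on the outer shells. The key mechanism: apply Theorem \ref{Akopyan} on $St^l(v)$ to the current short map, but with the uniform approximation parameter chosen small enough — say smaller than $\min(\e_1, \dots, \e_l)$ and also small enough (invoking Lemma \ref{Petrunin}, which needs $\sup \pull < \infty$, true because intrinsic isometries are being produced) that later modifications on $St^{l+1}(v) \setminus St^l(v)$ do not destroy the intrinsic isometry property already achieved on the inner shells. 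This is the standard ``add corrections with geometrically decreasing size'' scheme; the limit $h = \lim h_l$ is well-defined because on each $Sh^l(v)$ the maps $h_m$ stabilize (or converge rapidly) for $m \geq $ some index, and $|f(x) - h(x)| < \e_l$ on $Sh^l(v)$ by the triangle inequality over the accumulated corrections.

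The technical heart is gluing a pl intrinsic isometry defined on the outer annulus to the frozen pl intrinsic isometry on the inner subpolyhedron so that the result is still an \emph{intrinsic} isometry on the union, not merely on each piece. Restricting a pl intrinsic isometry to a subpolyhedron need not give an intrinsic isometry of the submetric (paths may want to leave the subpolyhedron), but the relevant direction here is the reverse: one must check that $\pull_h$ on all of $\P$ still equals $d_\P$. The clean way is to apply Akopyan's Theorem on the full $St^l(v)$ with boundary conditions — i.e., to a short map that already equals the desired $h$ on $St^{l-1}(v)$ — and argue (as in Akopyan's construction, which is local and proceeds simplex-by-simplex / skeleton-by-skeleton) that the construction can be carried out without altering a prescribed subpolyhedron on which the map is already a pl intrinsic isometry. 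Equivalently, one can first double-subdivide so that $St^{l-1}(v)$ is a full subcomplex and run Akopyan relative to it.

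The main obstacle I expect is precisely this ``relative Akopyan on a finite subpolyhedron fixing a subcomplex'' step: verifying that Akopyan's inductive construction in \cite{Akopyan} is compatible with leaving a prescribed subcomplex untouched. If that is not literally stated there, the fallback is to not fix the inner map exactly but only approximately, with errors in the $l$-th stage bounded by $2^{-l}\min(\e_1,\dots,\e_l)$, and then invoke Lemma \ref{Petrunin} at each stage to guarantee that the accumulated perturbations keep $\pull_h = d_\P$ in the limit — this trades an exact freezing for a convergent-corrections argument, at the cost of a more careful (but routine) bookkeeping of the $\d(f,\e)$ thresholds from Lemma \ref{Petrunin}. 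Either way, the outcome is a single pl intrinsic isometry $h : \P \to \mathbb{M}^N_k$ with $|f(x) - h(x)| < \e_l$ for $x \in Sh^l(v)$, as required.
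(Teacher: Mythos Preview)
The paper does not actually supply a proof of this theorem; it simply asserts that Akopyan's original construction in \cite{Akopyan} can be re-run with a location-dependent accuracy parameter (using $\e_l$ on $Sh^l(v)$ rather than a single global $\e$), and refers to \cite{Minemyer2} for details in the case $k=0$. Since Akopyan's argument is local---proceeding skeleton by skeleton and simplex by simplex via Brehm's extension theorem---this is entirely reasonable, and it is a genuinely different route from yours: rather than deducing the relative statement from the uniform one by an exhaustion, one reopens the proof and carries the variable tolerance through each local step.

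Your bootstrapping approach has a gap you do not address. You offer two strategies. Strategy (A), freezing the map on $St^{l-1}(v)$ and running Akopyan relative to that subcomplex, you correctly flag as needing a ``rel-subcomplex'' version of Akopyan that is not stated anywhere; note moreover that on the frozen part the map is already an \emph{intrinsic isometry}, hence not strictly short, while Akopyan's construction begins precisely by contracting to a strictly short map---so there is real tension at the interface simplices that you would have to resolve. Your fallback strategy (B), abandoning exact freezing in favor of convergent corrections of size $2^{-l}\min(\e_1,\dots,\e_l)$ and invoking Lemma \ref{Petrunin}, has a more serious problem: each application of Theorem \ref{Akopyan} passes to a finer subdivision, so the limit of the $h_l$ is a uniform limit of pl maps on ever-finer triangulations, and such a limit is generally \emph{not} pl. Lemma \ref{Petrunin} will indeed control $\pull_h$ in the limit, but it cannot recover piecewise linearity, and the theorem explicitly requires $h$ to be pl. Thus neither strategy closes as written; the straightforward fix is exactly the paper's: go back inside Akopyan's proof and let the accuracy vary with the shell.
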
  

The proof of Theorem \ref{Relative Akopyan} goes through in essentially the exact the same manner as Theorem \ref{Akopyan} and can be found in \cite{Minemyer2} for the case when $k = 0$.

\section{Key Lemmas}\label{Key Lemmas}

The following Lemma is proved in \cite{Krat} for the case when $k = 0$.  The same proof goes through for arbitrary $k$.

\begin{lemma}\label{Krat}

Let $\P$ be an $n$-dimensional polyhedron with curvature $k$, let $f:\P \rightarrow \mathbb{M}^{N}_k$ (where $N \geq n$) be a short map, and let $\e > 0$.  Then $f$ is $\e$-close to a short piecewise linear map.

\end{lemma}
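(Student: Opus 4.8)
The plan is to reduce the statement to a local, simplex-by-simplex construction and then glue. First I would fix a triangulation $\T$ of $\P$ realizing its polyhedral structure, and pass to a sufficiently fine iterated barycentric subdivision $\T'$: since $f$ is continuous on the compact (or, in the locally finite case, locally compact) polyhedron $\P$, I can arrange that the image under $f$ of each simplex of $\T'$ has diameter less than $\e/3$ (uniform continuity on each star, using local finiteness to patch). The map $f$ need not be simplicial with respect to $\T'$, so the naive ``straighten $f$ on each simplex'' does not immediately make sense; instead I would first replace $f$ by the simplicial approximation $g$ determined by $g(v) = f(v)$ on vertices $v$ of $\T'$ and affine (i.e.\ geodesic-affine, using the model-space structure of each simplex) extension over each simplex. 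Standard simplicial approximation gives $|f(x) - g(x)| < \e/3$ on all of $\P$ once $\T'$ is fine enough.

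The key point is then that $g$, the affine interpolant, is short \emph{up to a controlled error}, and I would fix this by rescaling. Concretely, for a single simplex $\Delta$ with vertices $v_0,\dots,v_m$, the secant lengths $|g(v_i) - g(v_j)| = |f(v_i) - f(v_j)|$ are at most $d_\P(v_i,v_j)$ since $f$ is short; the obstruction is that the affine map on the interior of $\Delta$ can slightly expand distances relative to the flat (spherical/hyperbolic) metric on $\Delta$, because the vertex data only constrains pairwise vertex distances, not the full metric. The remedy is to compose with a small homothety (contraction) $\lambda_\Delta \cdot$ centered appropriately, with $\lambda_\Delta = 1 - O(\text{mesh})$, chosen so that the resulting map on $\Delta$ is genuinely short with respect to the intrinsic metric of $\Delta$. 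As the mesh of $\T'$ tends to $0$, these correction factors tend to $1$ uniformly, so the total perturbation is $< \e/3$, and the short-on-each-simplex maps agree on vertices, hence glue to a globally defined pl map $h$; being short on each simplex and agreeing on overlaps, $h$ is short on $\P$ (a path between two points decomposes into pieces, each lying in a simplex). Combining the three $\e/3$ estimates gives $|f(x) - h(x)| < \e$.

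The main obstacle — and the step I would be most careful about — is making the ``short up to rescaling'' claim precise and uniform: one must control, in terms only of the mesh and the dimension $n$ and the curvature $k$, how much an affine map on a model simplex can distort the intrinsic metric given that it does not expand vertex-to-vertex distances. This is a compactness/continuity argument on the (finite-dimensional) space of $m$-simplices with bounded geometry, and the curvature $k \neq 0$ cases require using that on a small simplex the model-space metric is $C^2$-close to the Euclidean one, so the Euclidean estimate (which is exactly what Krat does in \cite{Krat}) transfers with an error that vanishes with the mesh. A secondary technical point is the locally-finite (non-compact) case: here $\e$ is a single positive constant, so I would just perform the above with a mesh function that is locally constant and fine enough on each star, which is harmless since each point of $\P$ has a neighborhood meeting only finitely many simplices.
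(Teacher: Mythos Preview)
The paper does not actually prove this lemma: it cites Krat's thesis \cite{Krat} for the Euclidean case and asserts that the same proof goes through for general $k$. So there is no in-paper argument to compare against, and the question is whether your reconstruction stands on its own. It does not, for two reasons.

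First, the gluing step is broken. You rescale each simplex by a factor $\lambda_\Delta$ depending on $\Delta$ and then assert that the resulting maps ``agree on vertices, hence glue.'' A nontrivial homothety moves every point except its center, so two simplices sharing a face will, after their individual contractions, send that face to two different places. If you intend a single global contraction of the target, say so --- but then the factor cannot depend on $\Delta$, and you are back to needing the bare interpolant itself to be short.

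Second, and more seriously, the estimate $\lambda_\Delta = 1 - O(\mathrm{mesh})$ is simply false for a merely Lipschitz $f$, no matter how fine the subdivision. Even on a fixed well-shaped simplex the affine interpolant of a $1$-Lipschitz map need not be close to short: take $f(x,y)=\max(x,y)$ on the triangle with vertices $(0,0),(h,0),(0,h)$; the interpolant is $x+y$, with gradient of norm $\sqrt 2$, for \emph{every} $h>0$. So mesh refinement alone never forces $\lambda_\Delta\to 1$. (Iterated barycentric subdivision, which you invoke, additionally produces arbitrarily thin simplices and makes things strictly worse; your closing remark about ``simplices with bounded geometry'' shows you sense this, but it contradicts the scheme you actually chose.) The usual repair is to first contract $f$ globally to a strictly short map, then \emph{smooth} it (e.g.\ mollify on each top simplex after a Kirszbraun extension), and only then interpolate on a fine subdivision with controlled aspect ratio: for a $C^1$ map the interpolant is $C^1$-close and hence still strictly short. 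Matching these local constructions compatibly along shared faces is where the genuine work lies, and is in effect what Krat carries out.
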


It is clear that any short map $f$ into Euclidean or hyperbolic space can be approximated by a strictly short map by fixing a point in the image of $f$ and contracting the image of $f$ (slightly) in the direction of this point.  One can perform a similar construction for a map into a sphere as long as the map is not surjective.  We just need to choose a point in the image whose antipodal point is \emph{not} in the image.  This is why we require that the maps in Theorems \ref{Akopyan} and \ref{Main Theorem - Euclidean} are not surjective when $k > 0$.  The next Lemma builds on the preceeding one in the case of embeddings.

\begin{lemma}\label{embedding lemma}

Let $\P$ be an $n$-dimensional Euclidean polyhedron, let $f:\P \rightarrow \E^{N}$ (where $N \geq 2n + 1$) be a short map, and let $\e > 0$.  Then $f$ is $\e$-close to a short piecewise linear embedding.

\end{lemma}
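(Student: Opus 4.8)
The plan is to combine Lemma~\ref{Krat} with the general position Lemma~\ref{Key Lemma} and a standard perturbation argument. First I would invoke Lemma~\ref{Krat} to replace $f$ by a short piecewise linear map $g : \P \to \E^N$ with $|f(x) - g(x)| < \e/2$ for all $x \in \P$; fix a triangulation $\T$ of $\P$ with respect to which $g$ is simplicial (after subdividing if necessary), with vertex set $\V$. The map $g$ need not be an embedding: distinct simplices may have overlapping images. The idea is to perturb the images of the vertices slightly so that $g(\V)$ lands in $(2n+1)$-general position, keeping the perturbation small enough to preserve both $\e$-closeness to $f$ and shortness.

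Next I would carry out the perturbation. Since $\P$ is locally finite, enumerate the vertices $v_1, v_2, \ldots$ and move $g(v_j)$ to a nearby point $g'(v_j)$, processing the vertices one at a time: having placed $g'(v_1), \ldots, g'(v_{j-1})$, the set of positions for $g'(v_j)$ that would create a forbidden affine dependency among some $\le 2n+1$ of the already-placed points together with $g'(v_j)$ is a finite union of affine subspaces of dimension $\le 2n < N$, hence nowhere dense; so we may choose $g'(v_j)$ arbitrarily close to $g(v_j)$ avoiding all of them. Extending $g'$ affinely over each simplex of $\T$ gives a piecewise linear map with $g'(\V)$ in $(2n+1)$-general position, and by Lemma~\ref{Key Lemma} $g'$ is an embedding. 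By choosing the vertex perturbations to sum to less than $\e/4$ in the appropriate sense, we get $|g(x) - g'(x)| < \e/4$ on all of $\P$, hence $|f(x) - g'(x)| < \e$.

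The remaining point — and the one requiring the most care — is to keep the perturbed map $g'$ \emph{short}. Moving vertices will in general increase some edge lengths and thereby destroy shortness. The standard fix is to first compose $g$ with a slight contraction toward a fixed point of its image, as in the discussion following Lemma~\ref{Krat}, to obtain a \emph{strictly} short piecewise linear map; on each simplex strict shortness of an affine map means the linear part is a strict contraction, and this is an open condition on the vertex images. Since $\P$ is locally finite, a sufficiently small vertex perturbation preserves strict shortness simplex-by-simplex; one then observes that a piecewise linear map that is short (indeed a strict contraction of the length metric) on each simplex of $\T$ is short as a map of the whole polyhedron, since any rectifiable path decomposes into pieces lying in individual simplices. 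Combining: contract $g$ to be strictly short, then perturb the vertices by an amount small enough to simultaneously (i) keep each simplicial piece a strict contraction, (ii) put $g'(\V)$ in $(2n+1)$-general position, and (iii) keep $|f - g'| < \e$. The resulting $g'$ is the desired short piecewise linear embedding. The main obstacle is bookkeeping the three smallness constraints at infinitely many vertices at once; local finiteness of $\T$ is exactly what makes this manageable, since near any point only finitely many simplices and vertices are relevant.
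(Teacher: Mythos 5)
Your proposal is correct and follows essentially the same route as the paper: apply Lemma~\ref{Krat} plus the contraction remark to get a strictly short pl map, then perturb the vertex images one at a time (using local finiteness to make the smallness constraints manageable) into $(2n+1)$-general position while preserving strict shortness as an open condition, and conclude via Lemma~\ref{Key Lemma}. The only cosmetic difference is that the paper phrases the openness of strict shortness in terms of the induced quadratic forms $G(\s) - G_{k-1}(\s)$ on each simplex, whereas you phrase it as the linear part being a strict contraction; these are the same observation.
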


\begin{proof}

By the preceeding Lemma and the comment thereafter we know that we can approximate $f$ by a strictly short pl map $h_0$ with $\frac{\e}{2}$ accuracy.  Let $\T$ be a triangulation of $\P$ so that $h_0$ is linear with respect to $\T$.  Recall from section 1 that we are assuming that $\T$ is locally finite.

Let $(v_i)_{i = 1}^{\infty}$ be an ordering of the vertices.  We will construct a sequence of simplicial functions (with respect to $\T$) $(h_i)_{i = 1}^{\infty}$ from $\P$ to $\E^N$ in such a way that for all $k$ the images of the first $k$ vertices under $h_k$ are in general position, $h_k$ is strictly short for all $k$, and so that the limit converges uniformly to a short embedding $h$.  $h_k$ will be a distance no more than $\frac{\e}{2^{k + 1}}$ from $h_{k - 1}$ and therefore $h$ will be a distance no more than $\e$ from $f$.

The construction of $h_k$ is recursive.  We suppose that $h_{k - 1}$ is defined and we use this to construct $h_k$ (where we consider our short pl map $h_0$ to be the $h_0$ of this sequence).  Let us begin the construction of $h_k$.  Define $h_k(v_i) := h_{k - 1}(v_i)$ for all $i \neq k$.  The work is to decide the value of $h_k(v_k)$.  
		
Let us first consider a single simplex $\s$ of dimension $l$ which contains the vertex $v_k$ in its boundary.  The intrinsic metric on $\P$ induces a quadratic form $G(\s)$ on $\E^l$ associated to $\s$ (see \cite{Minemyer1}) which is positive definite since $\P$ is a Euclidean polyhedron.  Similarly the map $h_{k - 1}$ induces a positive definite quadratic form $G_{k - 1}(\s)$ associated to $\s$, and the fact that $h_{k - 1}$ is a strictly short map when restricted to $\s$ is equivalent to $G(\s) - G_{k - 1}(\s)$ being a positive definite quadratic form.  
		
Now choose some arbitrary value for $h_k(v_k)$ and let $\d_k = |h_{k - 1}(v_k) - h_k(v_k)|_{\E_N}$.  It is easy to see that the quadratic form induced by $h_k$ associated with $\s$ is of the form $G_{k - 1}(\s) + D_k(\s)$ where the form $D_k(\s) \to 0$ as $\d_k \to 0$.  Then for $h_k$ to be strictly short on $\s$ we need the form $G(\s) - (G_{k - 1}(\s) + D_k(\s))$ to be positive definite.  But the collection of positive definite forms on $\E^{l}$ is open and so we can find $D_k(\s)$ close to $\vec{0}$ so that $G(\s) - (G_{k - 1}(\s) + D_k(\s))$ is positive definite.  So if we choose $\d_k$ small enough so that $G(\s) - (G_{k - 1}(\s) + D_k(\s))$ is positive definite then the map $h_k$ will be short on $\s$.
		
Since $\P$ is locally finite, $v_k$ is contained in $m < \infty$ simplices.  So choose $\d_k$ to be the minimum value chosen over all $m$ simplices.  Then $h_k$ is strictly short on every simplex which contains $v_k$.  But $h_k$ agrees with $h_{k - 1}$ on every simplex which does not contain $v_k$ and thus $h_k$ is srictly short over all of $\P$.  

To ensure that $h_k$ is close enough to $h_{k - 1}$ we just make sure that $\d_k < \frac{\e}{2^{k + 1}}$.
		   
Now we finish the construction of $h_k$.  Let $\d_k$ be defined as above.  Then consider $b(h_{k - 1}(v_k), \d_k)$, the open ball of radius $\d_k$ centered at $h_{k - 1}(v_k)$.  For almost any choice of $y \in b(h_{k - 1}(v_k), \d_k)$ the collection $\{ h_{k}(v_1), ..., h_{k}(v_{k - 1}), y \}$ will be in $(2n + 1)$-general position.  So choose $h_k(v_k)$ to be some such point.

This completes the construction of $h_k$.  Notice that the sequence $(h_k)_{k = 1}^{\infty}$ converges uniformly, and the limit will map the vertices of $\P$ into $(2n + 1)$-general position.  Therefore $h$ is an embedding.  $h$ will be (strictly) short on each simplex of $\P$ and will be $\e$-close to $f$. 
		
\end{proof}

Just as the relative version of Akopyan's Theorem \ref{Relative Akopyan} follows directly from the proof of the non-relative case, it is easy to see how to ``tweak" the proof of Lemma \ref{embedding lemma} to make it relative as well.  If, using the notation of the preceeding proof, the vertex $v_k$ is in $Sh^l(v)$, we just require that $\d_k < \text{min} \{ \frac{\e_1}{2^{k + 1}}, \frac{\e_l}{2^{k + 1}} \}$.  This proves:

\begin{cor}[Relative version of Lemma \ref{embedding lemma}]\label{embedding corollary}

Let $\P$ be an $n$-dimensional Euclidean polyhedron and fix a triangulation $\T$ of $\P$ and a vertex $v$ of $\T$.  Let $f:\P \rightarrow \E^{N}$, where $N \geq 2n + 1$, be a short map and let $\{ \e_i \}_{i = 1}^{\infty}$ be a sequence of positive numbers.  Then there exists a short piecewise linear embedding $h$ such that $|f(x) - h(x)| < \e_k$ for all $x \in Sh^k(v)$.

\end{cor}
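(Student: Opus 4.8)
The plan is to obtain Corollary~\ref{embedding corollary} by carrying out exactly the construction in the proof of Lemma~\ref{embedding lemma}, but bookkeeping the approximation errors shell by shell rather than uniformly. First I would fix the triangulation $\T$ and the distinguished vertex $v$ from the hypothesis, and invoke Lemma~\ref{Krat} together with the strict-shortness remark to replace $f$ by a strictly short pl map $h_0$ which is linear with respect to $\T$; here, though, I would already demand that $h_0$ is $\tfrac{\e_k}{2}$-close to $f$ on $Sh^k(v)$ for every $k$, which is legitimate since the cited approximation results can be made relative in the same way Theorem~\ref{Relative Akopyan} is relative to Theorem~\ref{Akopyan}. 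Then I would enumerate the vertices as $(v_i)_{i=1}^\infty$ and recursively build strictly short simplicial maps $h_k$ exactly as before: set $h_k(v_i)=h_{k-1}(v_i)$ for $i\neq k$, and perturb the single value $h_k(v_k)$ within a ball $b(h_{k-1}(v_k),\d_k)$.

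The key modification is the choice of $\d_k$. As in the original proof, $\d_k$ must be small enough that the finitely many quadratic forms $G(\s)-(G_{k-1}(\s)+D_k(\s))$ stay positive definite for all simplices $\s$ containing $v_k$, so that $h_k$ remains strictly short. On top of that, if $v_k\in Sh^l(v)$, I would additionally require $\d_k<\min\{\tfrac{\e_1}{2^{k+1}},\tfrac{\e_l}{2^{k+1}}\}$. The inclusion of the $\e_1$ term is a safety net: a vertex in $Sh^l(v)$ lies in closed stars that meet $Sh^{l-1}(v)$ and $Sh^{l+1}(v)$ as well, so moving $h_k(v_k)$ affects the images of points in adjacent shells, but since the shells are nested outward from $v$ and $v$ itself sits in $Sh^1(v)$, controlling the perturbation by the $l=1$ bound and the $l$ bound simultaneously suffices to keep every point of every shell within its prescribed tolerance once all the geometric-series errors are summed. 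Finally, for almost every $y\in b(h_{k-1}(v_k),\d_k)$ the set $\{h_k(v_1),\dots,h_k(v_{k-1}),y\}$ is in $(2n+1)$-general position, so such a choice of $h_k(v_k)$ is available.

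Having built the sequence, I would argue as before that $(h_k)$ converges uniformly to a limit $h$: for a fixed point $x$, only finitely many of the vertices of the simplex containing $x$ get moved, and each move of $v_k$ displaces $h_k(x)$ by at most $\d_k<\tfrac{\e_1}{2^{k+1}}$, so $\sum_k \d_k$ converges and the limit exists. The limit $h$ maps the vertex set into $(2n+1)$-general position, hence is an embedding by Lemma~\ref{Key Lemma}; it is strictly short on each simplex as a limit of the relevant positive-definiteness conditions (or by noting $h$ is short on each simplex since the strict-shortness defect is controlled), and is piecewise linear with respect to $\T$. For the relative estimate, fix $x\in Sh^k(v)$ and estimate $|f(x)-h(x)|\le |f(x)-h_0(x)|+\sum_{j\ge 1}|h_{j-1}(x)-h_j(x)|$; the first term is at most $\tfrac{\e_k}{2}$, and the tail is bounded by $\sum_{j}\d_j$ where only the terms whose vertex $v_j$ is adjacent to $x$ contribute, each of which is $<\tfrac{\e_k}{2^{j+1}}$ or $<\tfrac{\e_1}{2^{j+1}}$, giving a total strictly less than $\e_k$ after a short arithmetic check.

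The main obstacle, and the only place requiring genuine care beyond quoting Lemma~\ref{embedding lemma}, is verifying that the shell-indexed error bookkeeping is actually consistent: a perturbation of $h_k(v_k)$ with $v_k\in Sh^l(v)$ moves images of points lying in $Sh^{l-1}(v)\cup Sh^l(v)\cup Sh^{l+1}(v)$, so one must confirm that the bound $\min\{\e_1,\e_l\}/2^{k+1}$ really keeps each affected point within the tolerance assigned to \emph{its} shell. This is exactly why both the $\e_1$ and the $\e_l$ terms appear, and checking it amounts to the observation that for any point $x$ in any shell, every vertex $v_k$ whose star contains $x$ satisfies: either $v_k$ is in a shell of index $\ge 2$ (so is covered by its own $\e_l$ bound, which is $\le$ the tolerance of a lower-indexed shell only if... ) — more robustly, one simply notes that summing the $\e_1/2^{k+1}$ contributions alone already bounds everything by $\e_1/2\le$ nothing useful, so the genuinely correct statement is that for $x\in Sh^k(v)$ the displacing vertices lie in $Sh^{k-1}\cup Sh^k\cup Sh^{k+1}$, and using $\d_j<\e_{l_j}/2^{j+1}$ with $l_j\ge k-1$ is handled by also indexing the bound by $k-1$; I would phrase the requirement as $\d_k<\min_{l\,:\,v_k\in \overline{St}(Sh^l(v))}\{\e_l/2^{k+1}\}$, a finite minimum, which makes the final telescoping estimate immediate. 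Everything else is routine and identical to the absolute case.
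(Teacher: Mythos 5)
Your proposal follows the same route the paper sketches: re-run the recursive construction of Lemma~\ref{embedding lemma}, choose the initial pl approximation $h_0$ to be $\e_k/2$-close to $f$ on each shell $Sh^k_{\T}(v)$, and then shrink each vertex perturbation $\d_k$ according to which shell $v_k$ lies in. You also correctly identify, in your final paragraph, that the constraint the paper records --- $\d_k < \min\{\e_1/2^{k+1},\, \e_l/2^{k+1}\}$ when $v_k \in Sh^l(v)$ --- is not sufficient for an arbitrary positive sequence $\{\e_i\}$: a point $x \in Sh^m(v)$ is displaced by the perturbations of vertices lying in $Sh^{m-1}(v) \cup Sh^m(v) \cup Sh^{m+1}(v)$, so if $\e_1$, $\e_{m-1}$, $\e_{m+1}$ are all much larger than $\e_m$ the stated bound does not force the accumulated displacement at $x$ below $\e_m/2$. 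Your replacement $\d_k < \min\{\e_l/2^{k+1} : St(v_k) \cap Sh^l(v) \neq \emptyset\}$, a finite minimum by local finiteness, is exactly what makes the telescoping estimate close. One caution: the ``safety-net'' discussion in your middle paragraph, which tries to justify the $\e_1$ term as handling the adjacent shells, is incorrect and simply reproduces the paper's imprecision --- as you yourself observe afterward, that term alone only bounds the total displacement by $\e_1/2$, which is useless when $\e_m$ is small --- so that passage should be deleted and only the corrected constraint retained.
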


Everything in Lemma \ref{embedding lemma} and Corollary \ref{embedding corollary} goes through for spherical and hyperbolic polyhedra as well.  We just need to replace Lemma \ref{Key Lemma} with Lemma \ref{Key Lemma - Spherical and Hyperbolic Polyhedra} and, in the case of spherical polyhedra, assume that our starting map $f$ is not surjective so that we can approximate it by a strictly short map.  This yields:

\begin{cor}[General Curvature version of Corollary \ref{embedding corollary}]\label{embedding corollary - general curvature}

Let $\P$ be an $n$-dimensional polyhedron of curvature $k$ and fix a triangulation $\T$ of $\P$ and a vertex $v$ of $\T$.  Let $f:\P \rightarrow \E^{N}$ be a short map, which is not surjective in the case when $k>0$.  Assume $N \geq 2n+1$ if $k > 0$ and $N \geq 2n + 2$ if $N < 0$, and let $\{ \e_i \}_{i = 1}^{\infty}$ be a sequence of positive numbers.  Then there exists a short piecewise linear embedding $h: \P \rightarrow \mathbb{M}^N_k$ such that $|f(x) - h(x)| < \e_l$ for all $x \in Sh^l(v)$.

\end{cor}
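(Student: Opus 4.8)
The plan is to mimic the proof of Lemma~\ref{embedding lemma} and Corollary~\ref{embedding corollary} almost verbatim, substituting the curved model space $\mathbb{M}^N_k$ for $\E^N$ and making the few adjustments the curvature forces. First I would invoke Lemma~\ref{Krat} (stated for arbitrary $k$) together with the remark following it — using the non-surjectivity hypothesis when $k>0$ to contract the image toward an interior point whose antipode is missed — to approximate $f$ within $\frac{\e_1}{2}$ by a \emph{strictly short} pl map $h_0$, linear with respect to some locally finite triangulation $\T'$ refining the data. (One must be slightly careful here: the relative version should taper, so I would actually arrange $|f(x)-h_0(x)|<\min\{\frac{\e_1}{2},\frac{\e_l}{2}\}$ for $x\in Sh^l_{\T'}(v)$, which costs nothing since the contraction can be made to fix a neighborhood of $v$ and shrink more aggressively farther out, or one simply reorders the two approximation steps.)

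Next I would run the same recursive vertex-perturbation as in Lemma~\ref{embedding lemma}. Order the vertices $(v_i)_{i=1}^\infty$ and build $h_k$ from $h_{k-1}$ by leaving all vertex images fixed except $h_k(v_k)$, which is moved by at most $\d_k$. The strict-shortness bookkeeping is identical in spirit: each $l$-simplex $\s$ containing $v_k$ carries a positive definite form $G(\s)$ coming from the intrinsic metric — here I would cite the fact that small simplices in $\mathbb{M}^l_k$ are still governed by a positive definite quadratic form, or equivalently work on each simplex via its comparison chart, exactly as the paper does when it says ``the same proof goes through for arbitrary $k$'' — and strict shortness of $h_{k-1}|_\s$ is openness of $G(\s)-G_{k-1}(\s)$ in the cone of positive definite forms, so a sufficiently small perturbation $D_k(\s)$ preserves it. Local finiteness lets us take $\d_k$ to be the minimum over the finitely many simplices at $v_k$, and then also $\d_k<\min\{\frac{\e_1}{2^{k+1}},\frac{\e_l}{2^{k+1}}\}$ when $v_k\in Sh^l_{\T'}(v)$, giving uniform convergence to a short pl map $h$ with the desired relative closeness.

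The only genuinely new ingredient is the general-position step, and this is where I would be most careful. Instead of $(2n+1)$-general position in the affine sense, I use the curved notion of general position from the Preliminaries (linear independence of the corresponding vectors in $\R^{N+1}$ for $\mathbb{S}^N_k$, or in $\R^N$ via the upper half-space model for $\mathbb{H}^N_k$), and I need the vertex images to land in $(2n+1)$-general position when $k>0$ and in $(2n+2)$-general position when $k<0$ — precisely the hypotheses of Lemma~\ref{Key Lemma - Spherical and Hyperbolic Polyhedra}. The point is that these degeneracy conditions are again the vanishing of finitely many (now polynomial, rather than affine) functions of the coordinates of $h_k(v_k)$, so the bad set inside the ball $b(h_{k-1}(v_k),\d_k)$ is a proper analytic subvariety and hence measure zero; thus ``almost any'' choice of $h_k(v_k)$ works. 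In the limit the vertex images are in the required general position, and Lemma~\ref{Key Lemma - Spherical and Hyperbolic Polyhedra} makes $h$ an embedding.

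I expect the main obstacle — or at least the one point needing more than a sentence — to be confirming that the curved general-position conditions really are nondegenerate in the required sense: that for $k<0$ the hyperbolic metric being distorted by the upper half-space model is harmless (the paper already flags this, since only the linear-algebraic condition matters), and that for $k>0$ the ``vectors in $\R^{N+1}$'' picture behaves well under small perturbations of a point on the sphere $\mathbb{S}^N_k$, so that the locus where some $\le N+1$ of them become linearly dependent is a proper subvariety avoidable by a generic nearby choice. Everything else is a direct transcription of the Euclidean argument, with Lemma~\ref{Key Lemma} replaced by Lemma~\ref{Key Lemma - Spherical and Hyperbolic Polyhedra} and Lemma~\ref{Krat} and its contraction remark supplying the strictly short pl starting map (non-surjective when $k>0$).
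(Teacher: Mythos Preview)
Your proposal is correct and follows exactly the approach the paper indicates: the paper's own ``proof'' of this corollary is the single sentence preceding it, which says that everything in Lemma~\ref{embedding lemma} and Corollary~\ref{embedding corollary} goes through for spherical and hyperbolic polyhedra once Lemma~\ref{Key Lemma} is replaced by Lemma~\ref{Key Lemma - Spherical and Hyperbolic Polyhedra} and non-surjectivity is used when $k>0$ to obtain a strictly short approximation. You have simply (and accurately) expanded that sentence, including the correct observation that the curved general-position degeneracies are still a measure-zero locus in the ball $b(h_{k-1}(v_k),\delta_k)$, so the recursive perturbation step goes through unchanged.
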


\section{Proof of the Main Theorems}\label{Proof of the Main Theorem}

Before proving Theorem \ref{Main Theorem - Euclidean} let us first show how to prove Corollary \ref{Akopyan Corollary} using Theorem \ref{Akopyan} and Lemma \ref{embedding lemma}.

\begin{proof}[Proof of Corollary \ref{Akopyan Corollary}]

Let $f: \P \rightarrow \mathbb{E}^{N}$ be a short map, where $N \geq 3n + 1$.  By Lemma \ref{Krat} we may assume that $f$ is strictly short and linear with respect to some triangulation $\T$ of $\P$.  Define $f_1: \P \rightarrow \mathbb{E}^{2n + 1}$ to be the first $2n + 1$ coordinate functions of $f$ and let $f_2: \P \rightarrow \mathbb{E}^{N - 2n - 1}$ to be the remaining coordinate functions.  Notice that since $N \geq 3n + 1$, $N - 2n - 1 \geq n$.
	
	Let $\s$ be any simplex of $\T$ and let $G_{f}(\s), G_{f_1}(\s),$ and $ G_{f_2}(\s)$ denote the quadratic forms associated with $\s$ induced by $f, f_1$, and $f_2$ respectively.  It is shown in \cite{Minemyer1} that $G_f(\s) = G_{f_1}(\s) + G_{f_2}(\s)$ for all $\s \in \T$.  Let $G(\s)$ be the quadratic form associated to $\s$ which is induced by the intrinsic metric on $\P$.  Since $f$ is strictly short we know that $G(\s)- G_f(\s) = G(\s) - G_{f_1}(\s) - G_{f_2}(\s)$ is positive definite.  So in particular $G(\s) - G_{f_2}(\s)$ is positive definite with respect to all simplices $\s$ and thus yields a piecewise flat metric on $\P$ which is ``larger" than the metric induced by $f_1$.  Therefore by Lemma \ref{embedding lemma} we can approximate $f_1$ (with $\frac{\e}{2}$ accuracy) by a simplicial (with respect to $\T$) embedding $h_1$ such that $(G(\s) - G_{f_2}(\s)) - G_{h_1}(\s)$ is positive definite with respect to every simplex $\s \in \T$. 
	
	Now by Akopyan's Theorem \ref{Akopyan}, since $G(\s) - G_{h_1}(\s)$ is positive definite on every simplex $\s$ of $\T$, we can approximate $f_2$ (with $\frac{\e}{2}$ accuracy) by a map $h_2$ such that $h_2$ is linear with respect to a subdivision $\T'$ of $\T$ and satisfies that $G_{h_2}(\s') = G(\s') - G_{h_1}(\s')$ on every simplex $\s'$ of $\T'$.  Then since $h_1$ is linear with respect to $\T$ and $\T'$ is a subdivision of $\T$, $h_1$ is linear with respect to $\T'$.  Thus our desired pl isometric embedding is $h := h_1 \oplus h_2: \P \rightarrow \E^N$. 

\end{proof}

\begin{proof}[Proof of Theorem \ref{Main Theorem - Euclidean} (1):]

Let $f: \P \rightarrow \E^{N}$ be a short map, where $N \geq 3n$.  By Corollary \ref{embedding corollary} we may assume that $f$ is a simplicial embedding with respect to some subdivision $\T'$ of $\T$.  Let $(f_i)_{i = 1}^{N}$ denote the coordinate functions of $f$. 
	
	The embedding guaranteed by Corollary \ref{embedding corollary} is not quite good enough here.  What we need is to approximate $f$ by a short pl embedding $F$ which also satisfies that the first $2n$ coordinate functions of $F$, when considered as a function into $\mathbb{E}^{2n}$, also maps the vertices of $\T'$ to points which are in general position.  What follows is the technical proof, but the idea is rather simple.  What we do is, one by one, approximate each coordinate function $f_i$ of $f$ with a real valued function $F_i$ in such a way that the first $i$ coordinate functions map the vertices of $\T'$ into general position when considered as a function into $\mathbb{E}^i$.  So $F_1$ maps no two vertices to the same number, $F_2$ is such that the map $F_1 \oplus F_2$ maps no three vertices onto the same line, etc.  
	
	To be specific, what we do is approximate each real-valued function $f_i$ by a real-valued function $F_i$ on $\P$ in such a way that:

	\begin{enumerate}
	
	\item Each $F_i$ is linear with respect to $\T'$.
	
	\item For each $1 \leq j \leq N$ the map $g_j:\P \rightarrow \mathbb{E}^j$ defined by the first $j$ functions $(F_i)_{i = 1}^j$ is strictly short and maps the vertices of $\T'$ to points which are in general position.
	
	\item $F_j$ will be an $\frac{\e_k}{2N}$ approximation of $F_{j - 1}$ within $Sh^k_{\T}(v)$ for all $j$ and $k$.  So the map $F:\P \rightarrow \mathbb{E}^N$ defined by all of the functions $(F_i)_{i = 1}^{N}$ will be $\frac{\e_k}{2}$ close to $f$ within $Sh^k_{\T}(v)$ and strictly short.  
	
	\end{enumerate}
	
	This construction is very similar to that of Lemma \ref{embedding lemma}.  The construction is recursive in both the functions and the vertices.  So let $(v_l)_{l = 1}^{\infty}$ be the vertex set of $\T'$ and suppose that $F_{j - 1}$ has already been defined for some $j$ (the map $f_1$ will serve as $F_0$) and that $F_j$ has been defined on all of the vertices before some $v_l$ ($F_1(v_1) := f_1(v_1)$ so there is no trouble starting the recursion).  Note that this means that we have already defined the map $g_{j - 1}$ satisfying (2) above, and we have defined the map $g_j$ on all of the vertices whose index is less than $l$. 
	
	Consider the real interval $(f_j(v_l) - \d_{jl} ,f_j(v_l) + \d_{jl} )$ for some $\d_{jl} > 0$.  We claim that almost every point in this interval will work for the image of $F_j(v_l)$.  To see this, consider the line segment in $\mathbb{E}^j$ defined by the curve $t \rightarrow (F_1(v_l), ..., F_{j - 1}(v_l), t)$ for $t \in (f_j(v_l) - \d_{jl} ,f_j(v_l) + \d_{jl} )$.  The intersection of this line segment with an affine (proper) subspace $\mathbb{A}$ of $\mathbb{E}^j$ is either empty, a point, or the whole line segment.  But any affine subspace $\mathbb{A}$ which contains the entire line segment also contains the line determined by this line segment.  In particular, if $\mathbb{A}$ contains the entire line segment then $\mathbb{A}$ contains the point $(F_1(v_l), ..., F_{j - 1}(v_l), 0) = (g_{j - 1}(v_l), 0)$.  So if we project $\mathbb{A}$ onto the first $j - 1$ coordinates we see that $g_{j - 1}(v_l)$ lies on some affine subspace $\mathbb{B}$ of $\mathbb{E}^{j - 1}$ whose dimension is 1 less than $\mathbb{A}$.  
	
	Now let $(v_{i_1}, ..., v_{i_m})$ be some collection of $m$ vertices each of whose index is less than $l$ and consider the $m - 1$ dimensional affine subspace $\text{Span}(g_j(v_{i_1}), ..., g_j(v_{i_m})) \subset \mathbb{E}^j$.  We want to define $F_j(v_l)$ so that $g_j(v_l)$ does not lie on this affine subspace (if $m < j + 1$).  The only situation in which we cannot define $F_j(v_l)$ is if $\text{Span}(g_j(v_{i_1}), ..., g_j(v_{i_m}))$ contains the entire line segment $(F_1(v_l), ..., F_{j - 1}(v_l), t)$ for $t \in (f_j(v_l) - \d_{jl} ,f_j(v_l) + \d_{jl} )$.  But if this was the case then there would exist an $m - 2$ dimensional affine subspace in $\mathbb{E}^{j - 1}$ which contained each of the points $(g_{j - 1}(v_{i_1}), ..., g_{j - 1}(v_{i_m}))$, contradicting the inductive hypothesis that the map $g_{j - 1}$ mapped the vertices of $\T'$ to points in $\mathbb{E}^{j - 1}$ which are in general position. 
	
	Thus for any collection of $m$ vertices (with $m < j + 1$) whose indices are less than $l$, the affine subspace $\text{Span}(g_j(v_{i_1}), ..., g_j(v_{i_m}))$ intersects the curve $(F_1(v_l), ..., F_{j - 1}(v_l), t)$ for $t \in (f_j(v_l) - \d_{jl} ,f_j(v_l) + \d_{jl} )$ in at most one point.  Since there are only finitely many such affine subspaces almost any selection for $F_j(v_l)$ in $(f_j(v_l) - \d_{jl} ,f_j(v_l) + \d_{jl} )$ will work to satisfy (2). 
	
	Clearly the map $F := g_N$ will be $\frac{\e_k}{2}$ close to $f$ within $Sh^k_{\T}(v)$ for $\d_{jl} < \text{min} \{ \frac{\e_k}{4N}, \frac{\e_{k+1}}{4N} \}$ (where we are assuming that our vertex $v_l$ considered above is in $Sh^k_{\T}(v)$).  We can make sure our new map is still short at each step in the recursive procedure by an argument identical to that of Lemma \ref{embedding lemma}, so we omit it here.  This completes the construction of the map $F$. 
	
	Notice that since $N \geq 3n \geq 2n + 1$ the map $F$ is a strictly short embedding which is linear with respect to $\T'$.  But the reason that we went through the previous construction is for the following.  Consider the function $h_1:\P \rightarrow \mathbb{E}^{2n}$ defined by the first $2n$ coordinate functions of $F$.  By (2) of the construction, $h_1$ also maps the vertices of $\T'$ to points in $\mathbb{E}^{2n}$ which are in general position.  Therefore by Corollary \ref{Key Corollary}, $h_1$ is an embedding when restricted to the closed star of any point of $\P$. 
	
	Consider the collection $\{ st(p) | p \in Sh^k_{\T}(v) \}$ where $st(p)$ denotes the open star of $p$ with respect to $\T'$.  Since $\T'$ is locally finite we can clearly choose a finite refinement of this collection which covers $Sh^k_{\T}(v)$.  This finite collection has a Lebesgue number $\d_k > 0$.  Let $\Delta_k$ denote the diagonal\footnote{ $\Delta_k = \{ (x, x) | x \in Cl(Sh^k_{\T}(v)) \} $ and $Cl(Sh^k_{\T}(v))$ denotes the closure of $Sh^k_{\T}(v)$} of $Cl(Sh^k_{\T}(v)) \times Cl(Sh^k_{\T}(v))$ and let $b(\Delta_k, \d_k)$ denote the open neighborhood of radius $\d_k$ of $\Delta_k$.  Then $b(\Delta_k, \d_k)^C$ is a closed subset of $Cl(Sh^k_{\T}(v)) \times Cl(Sh^k_{\T}(v))$ and is therefore compact.  Consider the function $\v: b(\Delta_k, \d_k)^C \rightarrow \mathbb{E}$ defined by $\v(x,y) := |F(x) - F(y)|_{\mathbb{E}^N}$.  $\v$ is positive over all of $b(\Delta_k, \d_k)^C$ since $\Delta_k \subset b(\Delta_k, \d_k)$.  Then since $b(\Delta_k, \d_k)^C$ is compact there exists $\mu_k > 0$ such that $\v(x, y) > \mu_k$ for all $(x, y) \in b(\Delta_k, \d_k)^C$. 
	
	Now define $h_2: \P \rightarrow \mathbb{E}^{N - 2n}$ to be the last $N - 2n$ coordinates of $F$.  We apply Theorem \ref{Relative Akopyan} to $h_2$ to obtain a map $\bar{h_2}$ so that $h := h_1 \oplus \bar{h_2}$ is a linear intrinsic isometry on a subdivision $\T''$ of $\T'$ and with $\bar{\e_k} := \text{min} \{ \frac{\e_{k}}{2}, \frac{\e_{k+1}}{2}, \frac{\mu}{3} \}$ accuracy within $Sh^k_{\T}(v)$.  Then $h(x) \neq h(y)$ for any $(x, y) \in b(\Delta_k, \d_k)^C$, and $h(x) \neq h(y)$ for any $(x, y) \in b(\Delta_k, \d_k)$ since $h_1$ is injective on the $\d_k$ neighborhood of every point.
	
\end{proof}

	\begin{figure}
\begin{center}
\begin{tikzpicture}

\draw (-0.2,0) -- (6,0); 
\draw (5.9,0.1) -- (6,0); 
\draw (5.9,-0.1) -- (6,0); 
\draw (0,-0.2) -- (0,4); 
\draw (-0.1,3.9) -- (0,4); 
\draw (0.1,3.9) -- (0,4); 
\draw (6,0)node[right]{$\mathbb{E}^{j-1}$};
\draw (0,4)node[left]{$\mathbb{E}$};

\draw[fill=black!] (4,2.5) circle (0.3ex);
\draw (4,2.5)node[right]{$(g_j(v_l),f_j(v_l))$};
\draw (4,4) -- (4,1);
\draw (4.25,3.95) arc (60:120:0.5cm); 
\draw (4,3.25)node[left]{$\d_{jl}$};
\draw (3.75,1.05) arc (240:300:0.5cm); 

\draw[dotted] (4,1) -- (4,0);
\draw[fill=black!] (4,0) circle (0.3ex);
\draw (4,0)node[below]{$(g_j(v_l),0)$};

\end{tikzpicture}
\end{center}
\caption{Projecting the interval about $(g_j(v_l),f_j(v_l))$ in the last coordinate onto the first $j-1$ coordinates in the proof of Theorem \ref{Main Theorem - Euclidean} (1).}
\label{ninthfig}
\end{figure}
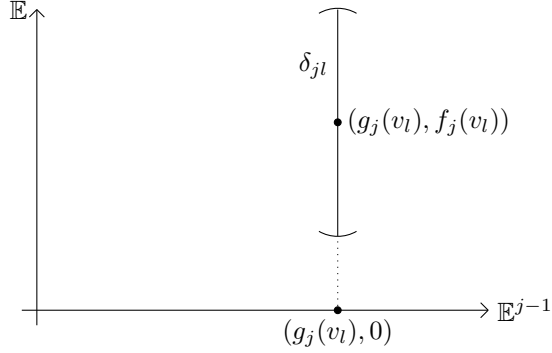

\textbf{Remark: } After beginning the writing of this paper, Tom Farrell showed me a paper of Enrico Le Donne \cite{Donne}.  In it he uses a very similar argument as the second half of the argument above in his proof of Lemma 5.5.

\begin{proof}[Proof of Theorem \ref{Main Theorem - Euclidean} (2):]

Just like the beginning of the last proof, let $f: \P \rightarrow \mathbb{E}^{N}$ be a short map, where $N \geq 2n + 1$.  By Lemma \ref{embedding lemma} we may assume that $f$ is a strictly short embedding which is linear with respect to some subdivision $\T'$ of $\T$.

We will construct two sequences of pl maps $(f_i)_{i = 1}^{\infty}$ and $(h_i)_{i = 1}^{\infty}$.  For any $i$ we will use $h_i$ to construct $f_{i + 1}$ and then use $f_{i + 1}$ to construct $h_{i + 1}$.  $f_i$ will be a strictly short embedding which is an $\a_i^k > 0$ approximation of $h_{i - 1}$ within $Sh^k_{\T}(v)$, and $h_i$ will be an intrinsic isometry which is a $\b_i^k > 0$ approximation of $f_i$ within $Sh^k_{\T}(v)$.  There will exist a sequence of subdivisions $(\T^{i})_{i = 1}^{\infty}$ of $\T'$ such that $h_i$ and $f_{i + 1}$ will be linear with respect to $\T^{i}$.  Our desired map will be $h := \lim_{i \to \infty} h_i$.  $h$ will not be pl since the subdivisions of $\T'$ get finer and finer as $i$ goes to infinity.  The reason for the limit is because, in general, each $f_i$ will not be an isometry and each $h_i$ will not be an embedding.  But for judiciously chosen $\a_{i}^{k}$ and $\b_{i}^{k}$ for each $i$ and $k$ the limit will be an intrinsic isometric embedding which is $\e_k$ close to $f$ within $Sh^k_{\T}(v)$ for each $k$.  The following diagram shows the order in which we construct the maps along with our desired accuracy at each step (here $\a_i := \{\a_i^k \}_{k=1}^{\infty}$ and similarly for $\b_i$).

\begin{center}
	\begin{tikzpicture}
\matrix(m)[matrix of math nodes,
row sep=2.6em, column sep=4em,
text height=1.5ex, text depth=0.25ex]
{f_1 & . & . & f_{i-1} & f_i & f_{i+1} & . \\
h_1 & . & . & h_{i-1} & h_i & h_{i+1} & . \\};
\path[->,font=\scriptsize,>=angle 90]
(m-1-1) edge node[right] {$\b_1$} (m-2-1)
(m-2-1) edge node[right] {$\a_2$} (m-1-2)
(m-2-3) edge node[right] {$\a_{i-1}$} (m-1-4)
(m-1-4) edge node[right] {$\b_{i-1}$} (m-2-4)
(m-2-4) edge node[right] {$\a_i$} (m-1-5)
(m-1-5) edge node[right] {$\b_i$} (m-2-5)
(m-2-5) edge node[right] {$\a_{i+1}$} (m-1-6)
(m-1-6) edge node[right] {$\b_{i+1}$} (m-2-6)
(m-2-6) edge node[right] {$\a_{i+2}$} (m-1-7);
	\end{tikzpicture}
	\end{center}
	
We start the procedure by defining $f_1 := f$ which is a strictly short embedding.  We then use Theorem \ref{Relative Akopyan} to define $h_1$ to be an intrinsic isometry which is $\b_1^k$ close to $f_1$ within $Sh^k_{\T}(v)$ and linear with respect to a subdivision $\T^{1}$ of $\T'$.  Assuming $h_{i - 1}$ is defined we use Lemma \ref{embedding lemma} to construct a strictly short embedding $f_i$ which is $\a_i^k$ close to $h_{i - 1}$ within $Sh^k_{\T}(v)$ and linear on the same triangulation $\T^{i - 1}$ that $h_{i - 1}$ is.  We then again invoke Theorem \ref{Relative Akopyan} to construct an intrinsic isometry $h_i$ which is $\b_i^k$ close to $f_i$ within $Sh^k_{\T}(v)$ and linear on a subdivision $\T^i$ of $\T^{i - 1}$.  If at each step $i$ and for each $k$ we choose
	
	 $$\a_i^k, \b_i^k < \frac{\e_k}{4^i}$$
	 
	 \n then the sequence $(h_i)_{i = 1}^{\infty}$ will converge uniformally to a function $h$ which is $\e_k$ close to $f$ within $Sh^k_{\T}(v)$.  So what is left is to show that, at each step, we can choose $\a_i^k$ and $\b_i^k$ small enough so that the limit will be an intrinsic isometric embedding.

We will use two separate tricks.  The trick for the intrinsic isometry is due to Petrunin in \cite{Petrunin1} and the trick for the embedding is due to Nash in \cite{Nash1}.

Let us first show that the limit function $h$ is an embedding.  Enumerate the closed simplices of $\T$ in some way (which is legitimate since $\T$ is locally finite).  Then for all $i \in \mathbb{N}$ define

$$ S_i := \left\{ (x, y) | x \text{ and } y \text{ are both contained in the first } i \text{ simplices and } |f_1(x) - f_1(y)|_{\mathbb{E}^N} \geq 2^{-i} \right\} $$

The set $S_i$, being a closed subset of a compact space\footnote{the product of the union of the first $i$ closed simplices}, is compact.  Since $f_i$ is an embedding the function $\v_i: S_i \rightarrow \mathbb{E}$ defined by $\v_i(x, y) = |f_i(x) - f_i(y)|$ obtains a minimum $\mu_i > 0$.  So by choosing $\b_i^k < \frac{\mu_i}{4}$ for each $k$ and $\a_l^k, \b_l^k < \frac{\mu_i}{4^l}$ for all $l > i$ and for each $k$ we ensure that any pair of points in $S_i$ cannot come together in the limit.  Eventually every pair of distinct points is contained in some $S_i$ and thus the limit function $h$ is an embedding.

Now we consider the problem of the limit being an intrinsic isometry.  Since $\text{pull}_{h_i}(x, x') = d_{\P}(x, x')$ for all $i \in \mathbb{N}$ and for all $x, x' \in \P$ we have that $\lim_{i \to \infty} \text{pull}_{h_i}(x, x') = d_{\P}(x, x')$ for any $x, x'$.  And it is clear that $\text{pull}_h(x, x') \leq \lim_{i \to \infty} \text{pull}_{h_i}(x, x') = d_{\P}(x, x')$, so what we need to show is that $d_{\P}(x, x') \leq \text{pull}_h(x, x')$ for any $x, x' \in \P$.
	
	Just as before we enumerate the closed simplices of $\T$ in some way.  Now we define $U_i$ to be the union of the first $i$ simplices.  Notice that $U_i$ is compact for all $i$ and that $\cup_{i = 1}^{\infty} U_i = \P$.  Let $g_i := h_i|_{U_i}$ and notice that for all $x, x'$ in a connected component\footnote{If $x$ and $x'$ are in different path components of $U_i$ then we will have that $\text{pull}_{g_i}(x, x') = 0 \ngeq \text{pull}_{h_i}(x, x')$} of $U_i$ we have that $\text{pull}_{g_i}(x, x') \geq \text{pull}_{h_i}(x, x')$.  So choose $\d(g_i, \frac{1}{i})$ as in Lemma \ref{Petrunin} and choose $\a_l^k, \b_l^k < \frac{\d(g_i, \frac{1}{i})}{4^l}$ for all $l > i$ and for each $k$.  We will then have that $|h(x) - g_i(x)|_{\mathbb{E}^N} < \d(g_i, \frac{1}{i})$ for all $x \in U_i$ and thus for all $x, x'$ in the same component of $U_i$ we have

$$ \text{pull}_h(x, x') + \frac{1}{i} > \text{pull}_{g_i}(x, x') \geq \text{pull}_{h_i}(x, x') = d_{\P}(x, x'). $$  

Any two points of $\P$ are eventually in the same component of some $U_i$ which completes the proof.

\end{proof}

It is easy to see how to alter the proof of Theorem \ref{Main Theorem - Euclidean} to prove Theorem \ref{Main Theorem - General Curvature} parts (1) and (2).  The only real difference is to replace Corollary \ref{embedding corollary} with Corollary \ref{embedding corollary - general curvature}.  

Notice that in the proof of Theorem \ref{Main Theorem - Euclidean} (1) the general idea is to split the original map $f$ into two maps $f_1$ and $f_2$.  $f_1$ is approximated by a local embedding, and then $f_2$ is approximated by an isometry with respect to the ``remaining" metric.  That is why in Theorem \ref{Main Theorem - General Curvature} (1) the target space is $\mathbb{M}^{2n}_k \times \mathbb{M}^n_k$ ($k>0$) or $\mathbb{M}^{2n+1}_k \times \mathbb{M}^n_k$ ($k<0$) instead of either $\mathbb{M}^{3n}_k$ or $\mathbb{M}^{3n+1}_k$, respectively.  But this problem does not occur in the proof of Theorem \ref{Main Theorem - Euclidean} (2).

What remains is to prove Theorem \ref{Main Theorem - General Curvature} (3).  The trick is to apply the Nash-Kuiper $C^1$ isometric embedding Theorem to Theorem \ref{Main Theorem - General Curvature} (2).  Namely, we will use the following:

\begin{theorem}[Kuiper \cite{Kuiper}]\label{Kuiper}

Let $M$ be an $n$-dimensional Riemannian manifold and let $f:M \rightarrow \mathbb{E}^N$ be a short embedding with $N \geq n + 1$.  Then $f$ is $\e$-close to a $C^1$ isometric embedding for any $\e > 0$.

\end{theorem}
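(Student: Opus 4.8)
The plan is to run the Nash--Kuiper iterative corrugation scheme (the prototype of convex integration), obtaining the desired $C^1$ isometric embedding as a $C^1$ limit of an explicit sequence of smooth immersions $f_0, f_1, f_2, \dots$ that successively absorb the \emph{metric deficit} $h := g - f_0^{*}\langle\cdot,\cdot\rangle$, where $g$ is the metric of $M$ and $f_0$ is a smoothing of the given short embedding. First I would reduce to the case where $f_0$ is a smooth, strictly short embedding, so that $h$ is a smooth positive-definite symmetric $2$-tensor: this is a mild contraction of $f_0$ toward an interior point of its image composed with a $C^0$-small mollification, in the spirit of the reduction following Lemma \ref{Krat} in the present paper. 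Next I would fix a locally finite coordinate cover of $M$ with a subordinate smooth partition of unity $\{\rho_\a\}$ and use it to write $h = \sum_{\a, j} a_{\a j}\, \ell_{\a j}\otimes \ell_{\a j}$ as a locally finite sum of \emph{primitive} rank-one forms, where each $\ell_{\a j}$ is the differential of one of finitely many fixed linear coordinate functions and each $a_{\a j} \ge 0$ is smooth; this rests on the elementary fact that the open cone of positive-definite quadratic forms is contained in the convex hull of finitely many rank-one forms.

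The heart of the argument --- and the step I expect to be the main obstacle --- is the one-stage corrugation lemma. Given a smooth immersion $F: M \to \E^N$ that is strictly short for a metric $g'$, a single primitive form $a\,\ell\otimes\ell$ with $g' - F^{*}\langle\cdot,\cdot\rangle - a\,\ell\otimes\ell$ still positive definite, and prescribed budgets $\eta, \mu > 0$, I would build a new smooth immersion $F'$ by superimposing on $F$ a rapid oscillation of frequency $\l$ in the direction dual to $\ell$,
\[
  F'(x) \;=\; F(x) \;+\; \frac{1}{\l}\Big( \gamma_1\!\big(\l\,\ell(x)\big)\, u_1(x) \;+\; \gamma_2\!\big(\l\,\ell(x)\big)\, u_2(x)\Big),
\]
where $u_1$ is (essentially) a unit tangent vector in the direction dual to $\ell$, $u_2$ is a unit normal field to $F$ (this is precisely where the hypothesis $N \ge n+1$ enters: a single spare dimension suffices in Kuiper's refinement of Nash), and $\gamma_1, \gamma_2$ are fixed periodic profile functions designed so that, once the high-frequency cross terms average out, $(F')^{*}\langle\cdot,\cdot\rangle = F^{*}\langle\cdot,\cdot\rangle + a\,\ell\otimes\ell + E$ with $\|E\| < \mu$, while $\|F' - F\|_{C^0} < \eta$. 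The delicate point is the $C^1$ bookkeeping: the perturbation is \emph{not} $C^1$-small --- a corrugation must oscillate --- but it must obey a bound $\|F' - F\|_{C^1} \le C\sqrt{\sup a} + \mu$ with $C$ universal, so that its $C^1$ size is controlled by the \emph{square root} of the amount of metric being added. The frequency $\l$ is then taken large enough to make $\eta$, $\mu$, and hence $E$, as small as we wish. Engineering the profiles $\gamma_1, \gamma_2$ and the frame $(u_1, u_2)$ so that all of this holds at once --- in particular making Kuiper's economical use of the one extra dimension go through --- is the technically heaviest part of the whole argument.

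With the corrugation lemma in hand, the rest is assembly. Concatenating it over all the (finitely many, on any compact set) primitive forms in the decomposition of the current deficit converts an immersion with deficit $h_k$ into one with deficit $h_{k+1}$ satisfying $\|h_{k+1}\| \le \tfrac12 \|h_k\|$ --- still positive definite, so that $f_{k+1}$ remains a strictly short immersion --- with the total perturbation as $C^0$-small as desired and of $C^1$-size $\lesssim \sqrt{\|h_k\|}$. Iterating from $f_0$, the deficits go to $0$ geometrically, hence $\sum_k \sqrt{\|h_k\|} < \infty$ and the successive perturbations are summable in $C^1$; therefore $f_k \to f_\infty$ in $C^1$, with $f_\infty$ a $C^1$ immersion and $f_\infty^{*}\langle\cdot,\cdot\rangle = g$, i.e. a $C^1$ isometric immersion. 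Budgeting the $C^0$ errors to sum to less than $\e$ gives $\|f_\infty - f_0\|_{C^0} < \e$. Finally $f_\infty$ is an \emph{embedding}: as a $C^1$ immersion of (for $M$ compact) a compact manifold it is injective on balls below a uniform scale, while it lies within $\e$ of the embedding $f_0$ in $C^0$, so once $\e$ is below a constant depending only on $f_0$ no two distinct points can be identified --- nearby ones by the immersion property, far-apart ones because $f_0$ already separates them by a fixed amount; for non-compact $M$ one runs the same scheme over a compact exhaustion with a diagonal choice of budgets.
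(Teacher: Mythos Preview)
The paper does not prove Theorem \ref{Kuiper} at all: it is quoted as a known result from Kuiper \cite{Kuiper} (with the historical remark about Nash's earlier version for $N \ge n+2$) and then applied as a black box to deduce Theorem \ref{Main Theorem - General Curvature} (3). So there is no ``paper's own proof'' to compare against; the author simply cites the theorem and moves on.

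That said, your sketch is a faithful outline of the actual Nash--Kuiper argument as carried out in \cite{Nash1} and \cite{Kuiper}: the reduction to a smooth strictly short map, the decomposition of the metric deficit into primitive rank-one forms, the one-step corrugation using a single normal direction (which is exactly Kuiper's contribution over Nash's two normal directions, and the place where $N \ge n+1$ enters), the square-root $C^1$ bound that makes the stages summable, and the Nash-style compactness argument for injectivity in the limit. As a sketch it is correct in structure and emphasis; the genuinely hard analytic work you flag --- designing the profile functions and controlling the error terms in the corrugation lemma --- is indeed the substance of Kuiper's paper, and your sketch appropriately identifies it as the main obstacle without pretending to resolve it. For the purposes of the present paper, however, none of this is needed: the theorem is invoked, not reproved.
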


On a historical note, John Nash in \cite{Nash1} proved the above statement but for $N \geq n+2$ and conjectured that it may be true for $N \geq n+1$ using a more controlled ``spiraling" technique.  A few years later Kuiper confirmed Nash's conjecture, and therefore the above Theorem \ref{Kuiper} is often referred to as the ``Nash-Kuiper $C^1$ isometric embedding theorem".

Obviously the $n$-sphere of any curvature $k>0$ admits an (smooth) isometric embedding into $\mathbb{E}^{n + 1}$.  But by the Nash-Kuiper Theorem \ref{Kuiper} hyperbolic $n$-space of any curvature $k<0$ admits a $C^1$ isometric embedding into $\mathbb{E}^{n+1}$.  For the original short embedding just consider the Poincare disk model.  So to prove Theorem \ref{Main Theorem - General Curvature} (3) just compose the isometric embedding from Theorem \ref{Main Theorem - General Curvature} (2) with the isometric embedding guaranteed by the Nash-Kuiper Theorem.

\section{Polyhedra with Finitely Many Isometry Types of Simplices}\label{Polyhedra with Finitely Many Isometry Types of Simplices}

Bridson in \cite{Bridson} and Bridson and Haefliger in \cite{BH} study metric polyhedra with piecewise constant curvature $k$, but the metric polyhedra which they need are \emph{not} locally finite.  So at first glance it seems that none of the results contained in this paper apply to this situation.  But they do impose a different sort of ``compactness" condition, namely that the polyhedron contains only finitely many isometry types of simplices.  What we do now is briefly discuss how we can substitute this condition for local finiteness to maintain all of the preceeding results contained within this paper.

As far as the proofs contained in this paper, it is easy to trace back through to verify that everything still holds.  The first place that we used local finiteness is in the proof of Lemma \ref{embedding lemma} when we chose $\d_k$.  But if there are only finitely many isometry types of simplices then there are only finitely many different restrictions for $\d_k$.  This is exactly the same situation as in the proof of Theorem \ref{Main Theorem - Euclidean} (1), and those are the only places where we used local finiteness.

What is trickier is verifying that this change in assumptions is legitimate for Akopyan's Theorem \ref{Akopyan} and the relative version \ref{Relative Akopyan} as these Theorems were used often throughout this paper.  Akopyan's paper \cite{Akopyan} is only available in Russian\footnote{as far as the author knows}, but an English proof of Theorem \ref{Relative Akopyan} is available in \cite{Minemyer2}.  

The proof of Akopyan's Theorem breaks down into three parts.  The first part is to construct a local projection near every face with codimension at least two.  The second part is to scale the composition of this projection with our given map so that this composition is strictly short.  The third and final step is to apply an approximation method to each simplex using a Theorem due to Brehm in \cite{Brehm}, one skeleton at a time, which builds up the intrinsic isometry.  The first two easily go through if there are finitely many isometry types of simplices.  The third part requires the axiom of choice since the collection of simplices may no longer be countable.  But this is fine, and everything goes through.

To recap, every result in this paper holds if we replace ``locally finite" with ``finitely many isometry types of simplices".

\section{Improvements for Highly Connected Polyhedra}\label{Improvements for Highly Connected Polyhedra}

Combining results from \cite{PWZ} with Theorem \ref{Main Theorem - Euclidean} results in the following two Theorems:

\begin{theorem}

Suppose $M$ is an $n$-dimensional Euclidean Polyhedron which is also a closed $(m-1)$-connected manifold, where $0 < 2m \leq n$.  Then $M$ admits a pl isometric embedding into $\mathbb{E}^{3n - m + 1}$

\end{theorem}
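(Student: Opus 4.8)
The plan is to retrace the proof of Theorem \ref{Main Theorem - Euclidean} (1), but to replace its ``local embedding into $\E^{2n}$'' ingredient by a genuine \emph{global} pl embedding into the lower-dimensional space $\E^{2n-m+1}$, which is available for metastable-range highly connected manifolds thanks to \cite{PWZ}. Recall that in the proofs of Corollary \ref{Akopyan Corollary} and Theorem \ref{Main Theorem - Euclidean} (1) the map $f$ is split into coordinate blocks $f_1 \oplus f_2$; writing $G(\s)$, $G_{f_1}(\s)$, $G_{f_2}(\s)$ for the quadratic forms on a simplex $\s$ induced by the intrinsic metric, by $f_1$, and by $f_2$ (as in \cite{Minemyer1}), one has $G_f = G_{f_1} + G_{f_2}$, so strict shortness of $f$ forces $G - G_{f_2}$ to be positive definite on every simplex. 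The count $3n = 2n + n$ there comes from needing $2n$ coordinates to place the vertices in general position (so that, by Corollary \ref{Key Corollary}, the first block is an embedding on every closed star) together with the remaining $\geq n$ coordinates to run Akopyan's Theorem \ref{Akopyan}; the global embedding is then assembled from local injectivity of the first block and long-range separation from the Akopyan isometry. The improvement replaces ``$2n$'' by ``$2n-m+1$'' and makes the first block a global embedding outright.

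First I would establish a highly connected analogue of Lemma \ref{embedding lemma}: if $M$ is a closed $(m-1)$-connected $n$-manifold with $0 < 2m \leq n$ and $g: M \to \E^{2n-m+1}$ is short, then $g$ is $\e$-close to a short pl \emph{embedding}. The proof of Lemma \ref{embedding lemma} already produces, from a strictly short pl map, a uniformly convergent sequence of strictly short pl maps; the only place the dimension $2n+1$ was used is the appeal to Lemma \ref{Key Lemma} to conclude that the limit is an embedding. Here I would instead invoke the Penrose--Whitehead--Zeeman embedding theorem \cite{PWZ}: since $M$ is a closed $(m-1)$-connected pl $n$-manifold with $n \geq 2m$, any pl map $M \to \E^{2n-m+1}$ can be approximated arbitrarily well by a pl embedding. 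Interleaving this approximation with the ``keep the induced quadratic form from growing too much'' perturbations of Lemma \ref{embedding lemma}, and using that $M$ is compact (finite triangulation, so no issue with the choice of the $\d_k$), yields the required short pl embedding.

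With this in hand the theorem follows exactly as Corollary \ref{Akopyan Corollary} did, only with a more economical split. Given a short map $f: M \to \E^N$ with $N \geq 3n-m+1$, write $f = f_1 \oplus f_2$ with $f_1: M \to \E^{2n-m+1}$ the first block and $f_2: M \to \E^{N-(2n-m+1)}$ the rest, noting $N-(2n-m+1) \geq n$. Replacing $f$ by a strictly short pl map (Lemma \ref{Krat}), the form $G - G_{f_2}$ is positive definite on every simplex, so by the analogue of Lemma \ref{embedding lemma} above we may approximate $f_1$ by a short pl \emph{embedding} $h_1: M \to \E^{2n-m+1}$ for which $(G - G_{f_2}) - G_{h_1}$ is still positive definite on every simplex. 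Then $G - G_{h_1}$ is positive definite on every simplex, so Akopyan's Theorem \ref{Akopyan} (with $k = 0$, where the non-surjectivity hypothesis is vacuous) lets us approximate $f_2$ by a pl map $h_2$, linear on a subdivision, with $G_{h_2} = G - G_{h_1}$ on every simplex. Since $h_1$ is already a global embedding, $h := h_1 \oplus h_2: M \to \E^{3n-m+1}$ is injective, hence a pl isometric embedding.

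I expect the main obstacle to be the first step, namely making the Penrose--Whitehead--Zeeman theorem usable in this metric/pl-approximation setting: one needs not just that $M$ embeds in $\E^{2n-m+1}$, but that an arbitrary short pl map is approximable by a short pl embedding, with the approximation compatible with the strict-shortness bookkeeping of Lemma \ref{embedding lemma}. This is precisely what must be extracted from \cite{PWZ} (general position together with engulfing in the metastable range, carried out inside a prescribed neighborhood of the given map); everything afterward is a routine repetition of the argument already used for Theorem \ref{Main Theorem - Euclidean} (1) and Corollary \ref{Akopyan Corollary}.
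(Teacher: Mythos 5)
Your core idea is correct and, I believe, the one the paper intends: split the target $\E^{3n-m+1}$ into a block of $2n-m+1$ coordinates where Penrose--Whitehead--Zeeman supplies a pl embedding of the closed $(m-1)$-connected manifold $M$, and a complementary block of $n$ coordinates where Akopyan's Theorem~\ref{Akopyan} fills in the rest of the metric. The dimension arithmetic $(2n-m+1)+n=3n-m+1$ works out, and the quadratic-form bookkeeping ($G_f = G_{f_1}+G_{f_2}$, positive definiteness of $G-G_{h_1}$) is exactly the engine of Corollary~\ref{Akopyan Corollary}, so the assembly step is sound.

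However, you set yourself an unnecessarily hard problem, and the obstacle you flag at the end is real but avoidable. You try to prove a highly connected analogue of Lemma~\ref{embedding lemma} --- that an \emph{arbitrary} short pl map $M \to \E^{2n-m+1}$ is $\e$-close to a short pl embedding --- and you correctly observe that this requires extracting an \emph{approximation} theorem from \cite{PWZ}, not merely the existence statement. The PWZ argument is by general position plus engulfing, and engulfing moves are not generically small, so getting such an approximation theorem (moreover one compatible with your strict-shortness control, on a fixed triangulation) is genuine work that \cite{PWZ} does not hand you. But the statement of the theorem asserts only that $M$ \emph{admits} a pl isometric embedding into $\E^{3n-m+1}$; there is no prescribed short map to approximate. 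So you can sidestep the obstacle entirely: take the PWZ embedding $e: M \to \E^{2n-m+1}$ as given, make it simplicial with respect to a subdivision $\T'$ of $\T$, and use compactness of $M$ to scale it down to a strictly short pl embedding $h_1 := c\,e$ (choose $c>0$ small enough that $G-G_{h_1}$ is positive definite on every simplex of $\T'$ --- finitely many, since $M$ is closed). Then apply Akopyan's Theorem~\ref{Akopyan} to any short map into $\E^n$ (the constant map suffices; $\e$-closeness is irrelevant here) with respect to the remaining metric $G-G_{h_1}$ to obtain a pl map $h_2$, linear on a subdivision $\T''$ of $\T'$, with $G_{h_2}=G-G_{h_1}$ simplexwise. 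Then $h_1 \oplus h_2: M \to \E^{3n-m+1}$ is pl, isometric, and injective because $h_1$ already is. This is both shorter than your plan and closes the gap you identified, which in your version remains an honest gap until the PWZ approximation statement is actually established.
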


\begin{theorem}

Suppose $M$ is an $n$-dimensional Euclidean Polyhedron which is also a compact $(m-1)$-connected manifold, where $0 < 2m \leq n$.  If either:

	\begin{enumerate}
	
	\item $\partial M \times I$ can be embedded into $\mathbb{R}^{2n-m}$, or
	
	\item $\partial M$ is $(m-2)$-connected\footnote{(-1)-connected is a vacuous condition},
	
	\end{enumerate}
	
then $M$ admits a pl isometric embedding into $\mathbb{E}^{3n-m}$.

\end{theorem}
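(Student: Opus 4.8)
The plan is to deduce this final theorem as a direct consequence of Theorem \ref{Main Theorem - Euclidean} (1) combined with the embedding results of \cite{PWZ}. Recall that Theorem \ref{Main Theorem - Euclidean} (1) already produces, for any $n$-dimensional Euclidean polyhedron and any short map $f \colon \P \to \E^N$ with $N \geq 3n$, a pl intrinsic isometric embedding approximating $f$. The improvement for highly connected manifolds comes from replacing the role of Lemma \ref{Key Lemma}, which requires $(2n+1)$-general position (and hence dimension $\geq 2n+1$) to guarantee that a simplicial map is globally injective. When $M$ is a closed $(m-1)$-connected $n$-manifold with $0 < 2m \leq n$, the results of Penrose--Whitehead--Zeeman in \cite{PWZ} show that $M$ pl embeds in $\E^{2n-m}$, i.e. the ``effective embedding codimension'' drops by $m$. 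So the hard general-position count that cost us $2n+1$ dimensions in the proof of Lemma \ref{embedding lemma} can be replaced by a cost of only $2n - m + 1$ dimensions (one extra dimension being the usual price for perturbing a pl embedding to one in general position so that the later Akopyan step can proceed).

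The key steps, in order: First I would revisit the proof of Corollary \ref{Akopyan Corollary} (equivalently the proof of Theorem \ref{Main Theorem - Euclidean} (1)), whose structure is to split the target $\E^N$ as $\E^{N_1} \oplus \E^{N_2}$, use the first block to carry a pl embedding of $\P$ and the second block (of dimension $\geq n$, via Akopyan's Theorem \ref{Akopyan} / \ref{Relative Akopyan}) to absorb the ``remaining'' metric $G(\s) - G_{h_1}(\s)$. Second, I would observe that the only place the first block needed dimension $2n$ (in the proof of Theorem \ref{Main Theorem - Euclidean} (1)) or $2n+1$ (in Lemma \ref{embedding lemma}) is to force general position of vertex images so that, respectively, $h_1|_{St(p)}$ is injective (Corollary \ref{Key Corollary}) or $h_1$ is globally injective (Lemma \ref{Key Lemma}). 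Third, I would replace that step: start from the given short $f$, use \cite{PWZ} to produce a pl \emph{topological} embedding $e \colon M \hookrightarrow \E^{2n-m}$, then (as in Lemma \ref{embedding lemma}) perturb $e$ slightly within $\E^{2n-m+1}$ to a \emph{strictly short} pl embedding whose vertex images are in general position, shrinking first so there is metric room — here $N_1 = 2n - m + 1$. Fourth, since $N - N_1 = (3n - m + 1) - (2n - m + 1) = n$, the second block has exactly dimension $n$, so Akopyan's Theorem applies on each simplex to the residual metric $G(\s) - G_{h_1}(\s)$ (positive definite because we arranged $h_1$ strictly short), yielding $h_2$ with $G_{h_2} = G - G_{h_1}$; then $h := h_1 \oplus h_2$ is the desired pl isometric embedding into $\E^{3n-m+1}$. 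The second theorem is handled identically, using instead the relative / bounded-geometry embedding theorem from \cite{PWZ} for compact manifolds with boundary (hypothesis (1) or (2) is precisely what \cite{PWZ} needs to embed $M^n$ in $\E^{2n-m}$ in the with-boundary case), giving $N_1 = 2n - m$, hence target $\E^{3n-m}$.

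The main obstacle I anticipate is verifying that the perturbation-to-general-position step is compatible with the embedding coming from \cite{PWZ} \emph{and} with strict shortness simultaneously: the \cite{PWZ} embedding is only asserted to exist (it need not be short or even bounded), so one must first rescale $M$ down inside $\E^{2n-m}$ to make it strictly short — which is fine for Euclidean targets by the contraction remark after Lemma \ref{Krat} — and then run the inductive vertex-by-vertex general-position perturbation of Lemma \ref{embedding lemma} in the extra coordinate, checking that each perturbation keeps the induced quadratic forms $G_{h_1}(\s)$ strictly dominated by $G(\s)$ on every simplex (using openness of the positive-definite cone, exactly as in that proof). The subtlety for the with-boundary case is ensuring that the embedding hypotheses (1), (2) are genuinely the hypotheses under which \cite{PWZ} produces the codimension-$m$ embedding; assuming that citation as given, the rest is the same bookkeeping. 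A secondary point worth a remark is that these constructions yield pl isometric embeddings but, unlike Theorem \ref{Main Theorem - Euclidean}, no relative ($\e_l$-tapering) statement is claimed, since \cite{PWZ} is invoked as a black box on all of $M$ at once; this is why the two theorems are stated only for closed / compact $M$ rather than for arbitrary locally finite polyhedra.
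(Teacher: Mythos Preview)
Your overall strategy is correct and is precisely what the paper intends: the paper gives no proof at all beyond the sentence ``Combining results from \cite{PWZ} with Theorem \ref{Main Theorem - Euclidean}'', and the natural way to combine them is exactly your $h_1 \oplus h_2$ splitting, with $h_1$ a (rescaled, hence strictly short) PWZ embedding and $h_2$ an Akopyan correction into $\E^n$ for the residual metric $G(\s)-G_{h_1}(\s)$.

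Two small points of confusion are worth cleaning up. First, the dimension bookkeeping: the PWZ theorem already gives $\E^{2n-m+1}$ in the closed case and $\E^{2n-m}$ in the bounded case under hypotheses (1) or (2); the extra ``$+1$'' in the closed theorem is \emph{not} the price of a general-position perturbation but is built into PWZ itself. Second, and relatedly, the whole general-position step you describe is unnecessary here. In the proof of Theorem \ref{Main Theorem - Euclidean} (1) the map $h_1$ into $\E^{2n}$ is only a \emph{local} embedding (on stars), which is why the Lebesgue-number/$\mu_k$ compactness trick is needed to keep $h$ globally injective after applying Akopyan to $h_2$. But PWZ hands you a \emph{global} pl embedding $h_1$, and any map of the form $h_1 \oplus h_2$ is then automatically injective. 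So you only need to (i) rescale the PWZ embedding to make it strictly short (possible since $M$ is compact), and (ii) apply Akopyan in the remaining $n$ coordinates; no vertex-by-vertex perturbation, no appeal to Lemma \ref{Key Lemma} or Corollary \ref{Key Corollary}, is required. Your closing remark that no $\e_l$-approximation statement is asserted is exactly right, and for the same reason.
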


\bibliographystyle{amsplain}

\end{document}